\newenvironment{proof}{\noindent{\sc Proof.}}{\qed}
\newtheorem{theorem}{Theorem}[section]
\newtheorem{lemma}{Lemma}[section]
\newtheorem{rem}{Remark}[section]
\newtheorem{definition}{Definition}[section]
\newtheorem{prop}{Proposition}[section]
\newtheorem{uda}{Example}[section]
\newcommand{\qed}{$\blacksquare$}
\def\bhag#1{\noindent
\setcounter{equation}{0}
\section{#1}
}
\def\RR{{\mathbb R}}
\def\ZZ{{\mathbb Z}}
\def\SS{{\mathbb S}}
\def\TT{\mathbb T}
\def\x{\mathbf{x}}
\def\k{\mathbf{k}}
\def\y{\mathbf{y}}
\def\w{\mathbf{w}}
\def\O{{\cal O}}
\def\C{{\mathcal C}}
\def\derf#1#2{{#1}^{(#2)}}
\def\esssup{\mathop{\hbox{\textrm{ess sup}}}}
\def\be{\begin{equation}}
\def\ee{\end{equation}}
\def\bea{\begin{eqnarray}}
\def\eea{\end{eqnarray}}
\def\eref#1{(\ref{#1})}
\def\disp{\displaystyle}
\def\donchitre#1#2{\vskip 6.5cm\noindent
\parbox[t]{1in}{\special{eps:#1.eps x=6.5cm y=5.5cm}}
\hbox to 7cm{}\parbox[t]{0.0cm}{\special{eps:#2.eps x=6.5cm y=5.5cm}}}
\def\tn{|\!|\!|}
\def\XX{{\mathbb X}}
\def\slp{{{\raise 0.5pt \hbox{\footnotesize $($}}}}   
\def\srp{{{\raise 0.5pt \hbox{\footnotesize $)$}}}}   
\newcommand{\ofp}[1]{{\slp{#1}\srp}} 
\title{
Super-resolution meets machine learning: approximation of measures }
\author{
 H.~N.~Mhaskar\thanks{
Institute of Mathematical Sciences, Claremont Graduate University, Claremont, CA 91711. The research of this author is supported in part   by the Office of the Director of National Intelligence (ODNI), Intelligence Advanced Research Projects Activity (IARPA), via 2018-18032000002.
\textsf{email:} hrushikesh.mhaskar@cgu.edu}
 }
 \date{}
\begin{document}

\maketitle

\begin{abstract}
The problem of super-resolution in general terms is to recuperate a finitely supported measure $\mu$ given finitely many of its coefficients $\hat{\mu}(k)$ with respect to some orthonormal system. The interesting case concerns situations, where the number of coefficients required is substantially smaller than a power of the reciprocal of the minimal separation among the points in the support of $\mu$.

In this paper, we consider the more severe problem of recuperating $\mu$ approximately without any assumption on $\mu$ beyond having a finite total variation. In particular, $\mu$ may be supported on a continuum, so that the minimal separation among the points in the support of $\mu$ is $0$.
A variant of this problem is  also of interest in machine learning as well as the inverse problem of de-convolution.

We define an appropriate notion of a distance between the target measure and its recuperated version, give an explicit expression for the recuperation operator, and estimate the distance between $\mu$ and its approximation.
We show that these estimates are the best possible in many different ways.

We also explain why for a finitely supported measure the 
approximation quality of its recuperation is bounded from below if the amount of information is smaller than what is demanded in the super-resolution problem.
\end{abstract}

\noindent\textbf{Keywords:} Super-resolution, machine learning, de-convolution, data defined spaces, widths.
\bhag{Introduction}\label{intsect}

This paper is motivated by two apparently disjoint areas; super-resolution and machine learning.
A problem of interest in both of these areas is the approximation of a measure using a finite amount of information on the measure.
Thus we wish to develop 
a theory of (weak-star) approximation of measures.
We will describe our motivation and the connections of this work to the problem of super-resolution and the problem of machine learning in Sections~\ref{super_res_sect} and ~\ref{machine_sect} respectively.
The aims and contributions of this paper, and its outline is given in Section~\ref{contributesect}.
This section and the next being introductory in nature, the notation used in these two sections may not be the same as the one used in the remainder of the paper.

\subsection{Super-resolution}\label{super_res_sect}
The problem of \emph{super-resolution}
is stated by Donoho \cite{donoho1992superresolution} as follows.
Given observations of the form
\be\label{donohoobs}
\sum_{k\in\ZZ}a_k\exp(-i\omega k\Delta) +z(\omega), \qquad |\omega|\le \Omega,
\ee
where  $\{a_k\}$ is  sequence  of complex numbers,  $\Delta, \Omega>0$, and $z$ represents a perturbation subject to the condition that $\int_{-\Omega}^\Omega |z(\omega)|^2d\omega \le \epsilon$,
 recuperate the sequence $\{a_k\}$ up to an accuracy $\O(\epsilon)$ in the sense of optimal recovery, when $\Omega\Delta<\pi$.
It is shown in \cite{donoho1992superresolution} that this is not possible in general, but it is possible under some sparsity assumptions.

Relevant to the current paper is a generalization stated by Cand\'es and Fernandez-Granda in \cite[Theorem~1.2]{candes2013super}. We denote the quotient space $\RR/(2\pi\ZZ)$ by $\TT$.
Let
\be\label{candeslow}
\mu(t)=\sum_{k=1}^K a_k\delta(t-t_k)+ Z(t),
\ee
where $\delta$ denotes the Dirac delta measure at $0$.
We assume that the moments 
$$
\sum_{k=1}^K a_k\exp(-ijt_k)+\int_\TT Z(t)\exp(-ijt)dt,
$$
are known for $|j|\le n$ for some integer $n\ge 1$.
The goal is to estimate how much degradation to expect when this data is extended for the values of $j$ with $n<|j|\le N$ for  some larger integer $N$.
Since the degradation is expected to be greater as $|j|$ increases, the authors propose to measure this degradation using the Fej\'er kernel
$$
F_N(t)=\frac{1}{N+1}\sum_{|j|\le N} (1-|j|/N)\exp(ijt).
$$
They prove that if
\be\label{candescond}
\eta=\min_{j\not=k}|t_j-t_k| \ge 2/n,
\ee
then a measure $\nu_n$ obtained by the solution of an optimization problem satisfies
\be\label{candesest}
\int_\TT \left|\int_\TT F_N(x-t)(d\mu(t)-d\nu_n(t))\right|dx \le c(N/n)^2\int_\TT |Z(t)|dt,
\ee
where $c$ is a positive constant.
We note that the number of observations considered known is $2n+1$.
Thus, the condition \eref{candescond} is a lower bound on the amount of information required in terms of the minimal separation in order to guarantee a stable recovery as measured in \eref{candesest}.

There is vast amount of literature on this problem,  where the problem is referred to with different names :
problem of hidden periodicities (e.g., \cite[Chapter~IV, Section~22]{lanczos}), direction finding in phased array antennas (e.g., \cite{krim}), detection of singularities (e.g., \cite{gelbtad, eckhof1, tanner1}), parameter estimation in exponential sums (e.g. \cite{singdet, pottstaschesingdet}), etc. The oldest we are aware of is the paper \cite{prony_original} of Prony, where the problem is considered without noise. We note also that there is some effort \cite{loctrigwave, batenkov2013accuracy, batenkov_stability_2016} in the direction of overcoming this barrier in the case of univariate trigonometric setting, where the information is in the form
\be\label{stackedobs}
\sum_{k=1}^K \sum_{r=0}^R a_{k,r}(-ij)^r\exp(-ijt_k), \qquad |j| <N,
\ee
so that each $t_k$ appears with multiplicity $R+1$.

We will not even begin to list the many modern works in the context of the periodic problem.
The problem has been studied also in other settings; for example, the sphere (e.g., \cite{bendory2015exact, bendory2015super}) or the rotation group \cite{filbir2016exact}, where the exponential monomials are replaced by eigenfunctions of the Laplace-Beltrami operator.
In all this work, it is required that the number of Fourier coefficients known about the measure is at least a constant multiple of $\eta^{-1/q}$, where $q$ is the dimension of the space involved.

This sort of condition seems to be an inherent barrier, we will refer to it as the \emph{minimal separation barrier}. 
We note that any finite set of points will have a positive minimal separation; the condition refers to the amount of information necessary to recuperate the measure to given accuracy.

In this paper, we are interested in approximating an arbitrary measure; not just a measure supported on a finite set.
When the support of the target measure is a continuum, then the minimal separation is $0$, and any recuperation with a finite amount of noisy data is arguably beyond super-resolution.
Of course, even in the absence of noise, an exact recuperation cannot be expected in general using only a finite amount of information.
On the other hand,
an approximate recovery is very standard in the trigonometric case; the entire section \cite[Chapter~2, Section~8]{zygmund} describes already many constructions, some of which are used in \cite{gelbtad} in the case of finitely supported measures.

To summarize, the question of overcoming the minimal separation barrier in super-resolution problems for point masses can be viewed as the problem of efficient approximation of a measure given a finite amount of information about the measure.

\subsection{Machine learning}\label{machine_sect}

A central problem in machine learning is to find a \emph{target function} $f$ on a space $\XX$, equipped with a probability measure $\mu^*$,  given  the information $f(x_j)+\epsilon_j$, $j=1,\cdots, M$, for some   points $x_j\in\XX$ chosen randomly from $\mu^*$, where $\epsilon_j$ is a random noise.
Typically, the function $f$ is defined on a very high dimensional space, say a space of dimension $Q$.
In this case, there are well known results in approximation theory, known as width theorems, which give a lower bound of the form $M^{-\gamma/Q}$ on how accurately one can approximate a function, for which the only available a priori information is that  it belongs to a smoothness class indexed by $\gamma$ (e.g., a Sobolev class) \cite{devore1989optimal}.
This is known as the \emph{curse of dimensionality}.

In recent years, deep networks have caused a revolution in machine learning,  with many spectacular achievements in industrial problems.
It is therefore an important problem to examine why and when deep networks perform better than the so called shallow networks.
We have argued in \cite{dingxuanpap} that one reason that deep networks perform better than shallow networks is that many functions of practical interest have a compositional structure which deep networks can exploit and shallow networks cannot.
For example, suppose that the target function $f$ is known to have the structure
\be\label{composition}
f (x_1,\cdots,x_4)=
f_1\left (f_{12}\ofp{x_1,x_2}, f_{34}\ofp{x_3, x_4} \right),
\ee
where the functions $f_1$, $f_{12}$, $f_{34}$ are continuously differentiable on a cube $[-1,1]^2$. A shallow network of the form
$$
\sum_{k=1}^N a_k\sigma(\w_k\cdot\x+b_k), \quad a_k, b_k \in\RR, \w_k\in\RR^4
$$
where $\sigma$ is a suitable activation function, 
yields an approximation $O(N^{-1/4})$ \cite{optneur}.

However, if we use the same construction as in \cite{optneur} to obtain shallow networks $P_1, P_{11}, P_{12}$, each with $N$ terms to approximate $f_1, f_{12}, f_{13}$ respectively,  one gets an accuracy $O(N^{-1/2})$ in each approximation. By the triangle inequality 
 the deep network given by
$$
P(x_1,\cdots,x_4)= P_1 \left (P_{12}\ofp{x_1,x_2}, P_{34}\ofp{x_3,x_4} \right)
$$
yields an accuracy $O(N^{-1/2})$, using only $O(N)$ parameters.

This theory suggests on the other hand that deep networks do not give any advantage when there is no curse of dimensionality.
There is some research exploring other prior assumptions on the target function which ensure that there is no curse of dimensionality of the kind described above.

Let us illustrate the situation by an example. 
Assume that $f$ admits a representation of the form
\be\label{rkhsl1def}
f(x)=\int_\XX G(x,y)d\mu(y), \qquad x\in\XX,
\ee
for some measure $\mu$ having a bounded total variation on $\XX$, then it is known that one can obtain approximations to $f$ by linear combinations of the form $\sum_{j=1}^n a_jG(x,y_j)$ with the degree of approximation being dimension-independent in terms of $n$, often tractable as well (e.g., \cite{barron1992neural, klusowski2016uniform, kurkova1, kurkova2, tractable}).
The total variation of $\mu$, sometimes known as the $G$-variation of $f$, plays the role of the norm of the derivatives of $f$ in classical approximation theory.
The proofs of such theorems depend upon a probabilistic argument, and are not constructive.
Therefore, in practice, the parameters $a_j$, $y_j$ are determined using some learning algorithm.

Thus, in theory, the problem is to determine $\mu$ given finitely many samples of $f$, even though the number of these samples may not be dimension independent.
Let us assume that $\XX$ is a compact Riemannian manifold on which $G$ has a Mercer expansion of the form $\sum_{k=0}^\infty \Lambda_k \phi_k(x)\phi_k(y)$ for some orthonormal system $\{\phi_k\}$.
Then  it is shown under some conditions in \cite{modlpmz} (see also \cite{compbio}) that one can obtain quadrature formulas to integrate all linear combinations of the first few functions $\{\phi_k\}$; the number of these functions dependent on the number of points at which the values of $f$ are available.
Thus,  in theory, the inverse problem of recuperating $\mu$ from the samples of $f$ is reduced to recuperating $\mu$ (respectively, its discretized version) using finitely many Fourier coefficients of $\mu$ with respect to the system $\{\phi_k\}$.

A question of theoretical interest here is  to estimate the degree of approximation of $\mu$ in a suitable sense in terms of the number of Fourier coefficients that can be computed reliably from the data.
This is the same problem that we were led to in our musings on super-resolution in Section~\ref{super_res_sect}, thereby establishing a close connection between these two problems.

\subsection{Contributions of this paper}\label{contributesect}
The problem of approximating a measure in the weak-star sense is inherently different from that of approximation of functions, and also from that of an exact or approximate recuperation of the support of the measure.
In particular, in the context of machine learning and probability density estimation, it is customary to use a ``convolution'' with a positive kernel. 
As an approximation device, it is well known that this is doomed not to give a good approximation. 
However, if we use a non-positive kernel to guarantee good approximation as we propose to do in this paper, then there are some difficulties in recuperating the support of the measure exactly and directly without some non-linear operations such as thresholding and clustering.
In this paper, we are focused on approximation of measures, and will postpone the discussion of these other issues to future work.

We will  describe how we address the difference between approximation of functions and that of measures in  the current paper.

To study an approximation problem, one needs the notion of a distance between two objects and some notion of smoothness of the target object to be approximated.
In classical theory of function approximation, there are standard ways of defining both of these; e.g., in approximating a continuous $2\pi$-periodic function by trigonometric polynomials, one uses the uniform norm and the smoothness is measured by an appropriate modulus of smoothness.
Such questions have been studied in many different contexts, e.g., \cite{devlorbk}.

In contrast, there is no standard definition for measuring the distance between two measures so that the convergence in the topology corresponds to weak-star convergence.
There are several ways of defining such a distance in different application domains, and we will list a few of these in Section~\ref{distsect} to motivate our own definition.
However, we are not aware of any standard smoothness class for measures.
In our Theorem~\ref{murecuptheo} below, we will observe that with no assumption on the target measure,  the degree of approximation depends entirely on the definition of the distance.
Since an estimate on the degree of approximation is typically (and in our theorem, is) achieved using a specific construction (given in \eref{recuperateop}), this surprising fact gives rise to the question whether one could obtain better estimates using a different construction, or even using a different kind of information about the measure.
We will discuss these issues in Theorem~\ref{widththeo} and Theorem~\ref{discmasstheo}.
In particular, Theorem~\ref{discmasstheo} provides one explanation for the minimal separation barrier in super-resolution of point masses as described in Section~\ref{super_res_sect}.
Another natural question to ask is to understand
 what a better estimate on the degree of approximation allows us to conclude about the measure, analogous to the converse theorems of approximation theory.
A trivial case is when the target measure is absolutely continuous with respect to a base measure, and the Radon-Nikodym derivative is then approximated as in the classical function approximation paradigm.
We will prove in Theorem~\ref{convtheo} that an improvement on the approximation bounds in Theorem~\ref{murecuptheo} implies that the target measure is in fact absolutely continuous with respect to a base measure and the derivative is in the right smoothness class as expected in the theory of function approximation.

In Section~\ref{distsect}, we review a few notions of distance between measures in order to motivate our Definition~\ref{distdef}.
In
Section~\ref{notationsect}, we develop the set up for our theory, and establish some notation to be used in the subsequent sections. The main results are discussed in Section~\ref{mainsect}, and the proofs of all the new results are given in Section~\ref{pfsect}.

We thank Professor Dr. Hans Feichtinger for many useful comments on the presentation in this paper.

\bhag{Distance between measures}\label{distsect}

In order to discuss  the quality of approximate recuperation of measures, we need first to develop a notion of distance between measures. There are many ways of defining  a distance.
We mention a few of these to motivate Definition~\ref{distdef} which we will use in this paper.

In the univariate case, a very old way to define a distance is the 
\emph{Erd\H os-Tur\'an discrepancy} (known also as \emph{Kolmogorov-Smirnov statistic} in statistics and \emph{star discrepancy} in information based complexity).
In the context of measures on $\TT=\RR/(2\pi\ZZ)$ (identified with 
$[-\pi,\pi)$),  this is defined for a signed measure $\nu$ with $\nu(\TT)=0$ by
\be\label{erdosturan}
D^{ET}(\nu)=\sup_{x\in [-\pi,\pi)}|\nu([-\pi,x))|.
\ee
A comparison of Fourier coefficients shows that
\be\label{erdosturankern}
D^{ET}(\nu)=\left\|\int_\TT G(\circ-t)d\nu(t)\right\|_{\TT,\infty},
\ee
where $G$ denotes the \emph{Bernoulli spline} defined by $\disp G(u)=\sum_{k=1}^\infty \frac{\sin ku}{k}$, $u\in [-\pi,\pi)$.
In the form \eref{erdosturankern}, this notion of discrepancy is generalized using many different kernels on different high dimensional domains (e.g., \cite{novak2008tractability, dick2010digital}).
A similar notion in statistics is the so called \emph{maximum mean discrepancy} (MMD), defined by
$$
\left\|\int_\XX G(\circ,t)d\nu(t)\right\|_{\XX,2}
$$
for some measure space $\XX$ and a positive definite kernel $G$ defined on this space.

Another popular distance between measures is the $L^1$-\emph{Wasserstein distance}. Let $\XX$ be a metric space and $\nu$ be a Borel measure on this space with $\nu(\XX)=0$. One of the equivalent definitions of this distance is given by
$\disp
\sup\left|\int_\XX fd\nu\right|,
$
where the supremum is over all Lipschitz continuous functions $f$ on $\XX$ with Lipschitz constant $\le 1$. If $\XX$ is a manifold, $\Delta$ is the Laplace-Beltrami operator on $\XX$, an analogue of this distance, more responsive to the manifold structure, is obtained by taking the supremum over all functions with $\|\Delta(f)\|_{\XX, \infty}\le 1$. Denoting the Green function for $\Delta$ by $G$, this in turn is equivalent to
$$
\left\|\int_\XX G(\circ,t)d\nu(t)\right\|_{\XX,1}.
$$

Finally, we note that the estimate \eref{candesest} utilizes a semi-norm of the form
$$
\left\|\int_\TT F_N(\circ-t)d\nu(t)\right\|_{\TT,1}
$$
where $F_N$ is the Fej\'er kernel with Fourier coefficients equal to $0$ outside of $[-N,N]$.

\bhag{Notation and definitions}\label{notationsect}

In this section, we describe the general set up for our discussion, and establish notation.

Let $\XX$ be a locally compact metric measure space, with
$d$ denoting the metric on $\XX$, and
$\mu^*$ being a distinguished positive measure on $\XX$.
In the sequel, 
only complete, sigma finite, Borel measures are considered, 
defined on a sigma algebra $\mathfrak{M}$ 
containing all Borel subsets of $\mathbb{X}$. In the sequel, 
$\nu$-measurability will be understood in the sense of membership
in this fixed sigma algebra. 

For  $B\subseteq \mathbb{X}$, $\nu$-measurable, and  a $\nu$-measurable function $f : B\to \RR$  we   write
$$
\|f\|_{\nu;B,p}:=\left\{\begin{array}{ll}
\displaystyle \left\{\int_B|f(x)|^pd|\nu|(x)\right\}^{1/p}, & \mbox{ if $1\le p<\infty$,}\\
\displaystyle|\nu|-\esssup_{x\in B}|f(x)|, &\mbox{ if $p=\infty$.}
\end{array}\right.
$$
$L^p(\nu;B)$   denotes the class of all $\nu$--measurable functions $f$ for which $\|f\|_{\nu;B,p}<\infty$, where two functions are considered equal if they are equal $|\nu|$--almost everywhere. We will omit the mention of $\nu$ if $\nu=\mu^*$ and that of $B$ if $B=\mathbb{X}$. Thus, $L^p=L^p(\mu^*;\mathbb{X})$.
 For $1\le p\le\infty$, we define $p'=p/(p-1)$ with the usual understanding that $1'=\infty$, $\infty'=1$.
The symbol $C_0(B)$ denotes the space of all continuous real functions on $B$ vanishing at infinity; $C_0=C_0(\XX)$.
 The symbol $C_0^*$ will denote the dual space of $C_0$; i.e., the class of all regular, Borel, measures with bounded total variation.

We also need a non-decreasing sequence $\{\lambda_k\}_{k=0}^\infty$ of real numbers, and an ($\mu^*$-) orthonormal system of functions $\{\phi_k\}_{k=0}^\infty$ in $C_0(\XX)\cap L^1(\XX)$. We assume that
$\lambda_0=0$, and $\lim_{k\to\infty}\lambda_k=\infty$. In addition we assume that the system $\{\phi_k\}_{k=0}^\infty$ is fundamental in both $L^1$ and $C_0$.

\begin{definition}\label{ddrdef}
The system $\Xi=(\mathbb{X},d,\mu^*,\{\lambda_k\}_{k=0}^\infty,\{\phi_k\}_{k=0}^\infty)$ is called an \textbf{admissible system} if
\begin{enumerate}

\item For each $x\in\mathbb{X}$ and $r>0$, the ball $\mathbb{B}(x,r)$ is compact.
\item There exists $q>0$ and $\kappa_1, \kappa_2, \kappa_3>0$ such that for $x\in\mathbb{X}$,  $r>0$,
\be\label{ballmeasurecond}
\mu^*(\mathbb{B}(x,r))=\mu^*\left(\{y\in\mathbb{X} : d(x,y)<r\}\right) \le \kappa_1r^q.
\ee
\item  For  $x, y\in \mathbb{X}$, $0<t\le 1$,
\be\label{gaussupbd}
\left|\sum_{k=0}^\infty \exp(-\lambda_k^2t)\phi_k(x){\phi_k(y)}\right|\le \kappa_2t^{-q/2}\exp\left(-\kappa_3\frac{d(x,y)^2}{t}\right)
\ee
\end{enumerate}
\end{definition}

\begin{rem}\label{ddsrem}
{\rm
In some of our other papers we have referred to an \emph{admissible system} in the sense of the above definition as a \emph{data defined space}.
This is motivated  by an idea for semi-supervised learning, called \emph{diffusion geometry/manifold learning}.
One assumes that the data for this kind of machine learning problem lives on an unknown low dimensional sub-manifold of a high dimensional Euclidean space.
The learning takes place based on the eigen-decomposition of a suitably constructed graph Laplacian. In theory, one may assume the eigen-decomposition of the heat kernel with respect to an elliptic differential operator on the manifold itself.
The properties of this heat kernel play a central role in the theoretical development.
In particular, it is shown in \cite[Theorem~4.3]{tauberian} that the condition \eref{gaussupbd} implies the localization properties of the kernels $\Phi_n$ defined in \eref{lockerndef} below; which in turn, plays a crucial role in this paper via Proposition~\ref{eignetprop}.
\qed
}
\end{rem}
\noindent\textbf{Constant convention:}\\

In the sequel, the symbols $c, c_1,\cdots$ will denote generic positive constants depending only on the system $\Xi$ and other constant parameters under discussion. Their value will be different at different occurrences, even within a single formula. The notation $A\sim B$ means $cA\le B\le c_2B$. \qed\\

We now define a candidate for a semi-norm on $C_0^*$ which will be used in this paper.
\begin{definition}\label{distdef}
Let $G :\XX\times\XX\to\RR$ be a kernel that admits a formal Mercer expansion $\disp G(x,y)\!=\!\sum_{j=0}^\infty b(\lambda_j)\phi_j(x)\phi_j(y)$, where $b(\lambda_j) \ge 0$ for every $j\ge 0$. For $\mu\in C_0^*$ and $1\le p\le\infty$, we define formally
\be\label{mumetric}
\tn\mu\tn_{G;p}=\left\|\int_{\XX} G(\circ,y)d\mu(y)\right\|_p.
\ee
\end{definition}

We will be particularly interested in the following class of kernels 
(cf. \cite{eignet}):
\begin{definition}\label{eigkerndef}
Let $\beta\in\RR$. A function $b:\RR\to\RR$ will be called a mask of type $\beta$ if $b$ is an even, $S$ times continuously differentiable function such that for $t>0$, $b(t)=(1+t)^{-\beta}F_b(\log t)$ for some $F_b:\RR\to\RR$ such that  $|\derf{F_b}{k}(t)|\le c(b)$, $t\in\RR$, $k=0,1,\cdots,S$,  and $F_b(t)\ge c_1(b)$, $t\in\RR$.     A function $G:\XX\times\XX\to \RR$ will be called  a kernel of type $\beta$ if it admits a formal expansion $G(x,y)=\sum_{j=0}^\infty b(\lambda_j)\phi_j(x)\phi_j(y)$ for some mask $b$ of type $\beta>0$. If we wish to specify the connection between $G$ and $b$, we will write $G(b;x,y)$ in place of $G$.
\end{definition}

\begin{uda}\label{trigkernexample}
{\rm
We consider $\XX=\TT^q$.  If $\beta>0$, then the kernel defined formally by
$$
\sum_{\k\in\ZZ^q}(\|\k\|^2+1)^{-\beta/2}\exp(i\k\cdot\circ)
$$
is a kernel of type $\beta$.
\qed}
\end{uda}
\begin{uda}\label{spherekernexample}
{\rm
We consider the unit sphere $\SS^q=\{\x\in\RR^{q+1} : |\x|_2=1\}$. If $\beta>0$, the kernel defined formally by $G(\x,\y)=(1-\x\cdot\y)^{(\beta-q)/2}$ is a kernel of type $\beta$ (\cite[Section~9.3(4)]{szego}).
\qed}
\end{uda}

When $G$ is a kernel as defined in Definition~\ref{eigkerndef}, $\tn\circ\tn_{G;p}$ is a norm
consistent with the weak-star topology on $C_0^*$. We will give a proof of the following simple proposition in Section~\ref{pfsect}.

\begin{prop}\label{mumetricprop}
Let $1\le p\le \infty$, $\beta>q/p'$, $G$ be a kernel of type $\beta$. Then  the functional $\tn\circ\tn_{G;p}$ defines a norm on $C_0^*$. If $\{\nu_n\}_{n=0}^\infty$ is a sequence in $C_0^*$, $\nu\in C_0^*$ then $\nu_n\stackrel{*}{\to}\nu$ if and only if $\tn\nu_n-\nu\tn_{G;p}\to 0$.
\end{prop}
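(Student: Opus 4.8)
The plan is to work with the potential $U_\mu:=\int_\XX G(\circ,y)\,d\mu(y)$, so that $\tn\mu\tn_{G;p}=\|U_\mu\|_p$, and to reduce the statement to one kernel estimate together with soft functional analysis. The estimate I need is
\be\label{gammaest}
\Gamma_p:=\sup_{y\in\XX}\|G(\circ,y)\|_p<\infty.
\ee
Decomposing the mask $b$ dyadically and using the localization of the kernels $\Phi_n$ from \eref{lockerndef} (available by \eref{gaussupbd}; cf.\ Remark~\ref{ddsrem}, Proposition~\ref{eignetprop}, \cite{eignet}) gives $|G(x,y)|\le c\,d(x,y)^{\beta-q}$ for $d(x,y)\le 1$ when $\beta<q$ (the cases $\beta=q$, $\beta>q$ only being easier) and $|G(x,y)|\le c_S\,d(x,y)^{-S}$ for $d(x,y)\ge 1$, with $S$ as large as the smoothness of $b$ permits. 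Summing over dyadic annuli and invoking the ball bound \eref{ballmeasurecond}, the near-diagonal contribution is a series $\sum_k 2^{-k(p(\beta-q)+q)}$ that converges \emph{exactly because} $\beta>q/p'$. Then Minkowski's integral inequality yields $\tn\mu\tn_{G;p}\le\int_\XX\|G(\circ,y)\|_p\,d|\mu|(y)\le\Gamma_p\|\mu\|_{C_0^*}<\infty$, while homogeneity and the triangle inequality are immediate from the linearity of $\mu\mapsto U_\mu$; so $\tn\circ\tn_{G;p}$ is a seminorm dominated by the total variation norm.

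To see it is actually a norm, I would use the relation $\int_\XX G(x,y)\phi_j(x)\,d\mu^*(x)=b(\lambda_j)\phi_j(y)$ — the precise meaning of the formal Mercer expansion, obtained by writing $G$ as a superposition of heat kernels (cf.\ \cite{eignet}). Together with Fubini (legitimate since $\Gamma_p<\infty$ and $\phi_j\in C_0\cap L^1\subset L^{p'}$) this gives $\int_\XX U_\mu\,\phi_j\,d\mu^*=b(\lambda_j)\int_\XX\phi_j\,d\mu$ for all $j$. If $\tn\mu\tn_{G;p}=0$, then $U_\mu=0$ $\mu^*$-a.e., so every such integral vanishes; since $b$ is a mask of type $\beta$ it is bounded below, so $b(\lambda_j)>0$ and hence $\int_\XX\phi_j\,d\mu=0$ for every $j$. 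As $\{\phi_j\}$ is fundamental in $C_0$ and $\mu\in C_0^*$, this forces $\mu=0$.

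For the equivalence with the weak-star topology, the efficient route is the dual description of the seminorm. Putting $T^*g(y):=\int_\XX G(x,y)g(x)\,d\mu^*(x)$, Fubini and $L^p$--$L^{p'}$ duality give $\tn\mu\tn_{G;p}=\sup\{|\langle\mu,T^*g\rangle|:\|g\|_{p'}\le 1\}$ (with a suitable norming family of test functions $g$ when $p=1$), and by \eref{gammaest} each $T^*g$ lies in $C_0$ with $\|T^*g\|_\infty\le\Gamma_p\|g\|_{p'}$. The decisive point is that $T^*\colon L^{p'}\to C_0$ is in fact \emph{compact}: $G$ is a smoothing kernel of order $\beta$, so $T^*$ factors through a smoothness space of order $\beta$, and since $\beta>q/p'$ the latter embeds compactly in $C_0$ (equivalently, the localization estimates show directly that $\{T^*g:\|g\|_{p'}\le1\}$ is equicontinuous and tight). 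Hence $\{T^*g\}$ is relatively compact in $C_0$, which makes $\tn\circ\tn_{G;p}$ continuous for the weak-star topology on every norm-bounded subset of $C_0^*$. From this both implications follow: if $\nu_n\stackrel{*}{\to}\nu$ then $\{\nu_n\}$ is bounded (Banach--Steinhaus), so $\tn\nu_n-\nu\tn_{G;p}\to\tn 0\tn_{G;p}=0$; conversely — for norm-bounded sequences, which is the pertinent case since weak-star convergent sequences are automatically bounded — if $\tn\nu_n-\nu\tn_{G;p}\to0$ then, using Banach--Alaoglu and metrizability of bounded sets under the weak-star topology, any cluster point $\tau$ of $\{\nu_n-\nu\}$ has $\tn\tau\tn_{G;p}=0$, so $\tau=0$ and $\nu_n\stackrel{*}{\to}\nu$.

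I expect the real obstacle to be the compactness of $T^*\colon L^{p'}\to C_0$: the bare boundedness in \eref{gammaest} already fails at $\beta=q/p'$, and it is precisely the strict inequality $\beta>q/p'$ that supplies the extra smoothing needed to upgrade boundedness to compactness, hence weak-star continuity of $\tn\circ\tn_{G;p}$ on bounded sets — everything else is either the annulus bookkeeping behind \eref{gammaest} or routine dual-space manipulation. I would also flag one point the statement elides: the norm-boundedness hypothesis in the converse direction cannot be removed — e.g.\ on $\TT$ with a smooth $G$, the measures $n(\delta_{1/n}-2\delta_{1/(2n)}+\delta_0)$ satisfy $\tn\cdot\tn_{G;p}\to0$ yet have unbounded total variation and no weak-star limit — so the equivalence is genuinely a statement about bounded sequences.
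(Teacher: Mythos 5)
Your argument is correct in substance and, for the first two assertions (well-definedness and the norm property), follows the same lines as the paper: where you re-derive $\sup_{y}\|G(\circ,y)\|_p\le c$ by dyadic annuli (your convergence condition $p(\beta-q)+q>0$ is precisely $\beta>q/p'$, so the bookkeeping is right), the paper simply quotes \eref{glpuniform}; and the definiteness argument via $\widehat{U_\mu}(j)=b(\lambda_j)\hat{\mu}(j)$, positivity of $b$, and fundamentality of $\{\phi_j\}$ in $C_0$ is identical to the paper's. Where you diverge is the topology statement. For $\nu_n\stackrel{*}{\to}\nu\Rightarrow\tn\nu_n-\nu\tn_{G;p}\to 0$ the paper splits $G$ as $\Phi_{2^m}(hb_{2^m};\cdot,\cdot)+(G-\Phi_{2^m}(hb_{2^m};\cdot,\cdot))$ and applies dominated convergence, while you invoke compactness of $T^*:L^{p'}\to C_0$. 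These are the same mechanism: the second inequality in \eref{glpuniform} says exactly that $T^*$ is a uniform limit of the finite-rank operators $g\mapsto\int_\XX\Phi_{2^n}(hb_{2^n};x,\circ)g(x)d\mu^*(x)$, hence compact. I would obtain the compactness this way rather than by ``factoring through a smoothness space'': on a non-compact $\XX$ the compact-embedding route needs a separate tightness argument, whereas the finite-rank approximation is already supplied by Proposition~\ref{eignetprop}(b).

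Your flagged caveat is not pedantry; it exposes a genuine gap in the paper's own proof of the other direction. The paper argues that $\tn\nu_n-\nu\tn_{G;p}\to 0$ forces $\widehat{(\nu_n-\nu)}(k)\to 0$ for every $k$, ``which in turn implies $\nu_n\stackrel{*}{\to}\nu$'' --- but passing from convergence against the dense subspace $\Pi_\infty$ to convergence against all of $C_0$ requires $\sup_n|\nu_n|(\XX)<\infty$, which is exactly what is unavailable in this direction. Your example $n(\delta_{1/n}-2\delta_{1/(2n)}+\delta_0)$ on $\TT$, with $\beta$ large enough that $G(x,y)$ is a $C^2$ function of $x-y$, is a valid counterexample to the proposition as literally stated: the second-difference bound gives $\tn\cdot\tn_{G;p}=O(1/n)$, while the total variation $4n$ rules out any weak-star limit by Banach--Steinhaus. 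So the correct statement --- and the one your argument actually proves --- is that $\tn\circ\tn_{G;p}$ characterizes weak-star convergence for \emph{norm-bounded} sequences: the ``only if'' direction is unconditional, the ``if'' direction needs the boundedness hypothesis.
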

Next, we define some \emph{ smoothness classes of functions} in terms of their degree of approximation by linear combinations of  $\{\phi_k\}$.
We define
\be\label{diffpolydef}
\Pi_\lambda=\mathsf{span}\{\phi_k : \lambda_k <\lambda\}, \qquad \lambda>0,
\ee
and $\disp\Pi_\infty=\bigcup_{\lambda>0}\Pi_\lambda$. If $\lambda\le 0$, we denote $\Pi_\lambda=\{0\}$. Following \cite{mauropap}, we refer to the elements of $\Pi_\infty$ as \emph{diffusion polynomials}. The $L^p$-closure of $\Pi_\infty$ is denoted by $X^p$; i.e., $X^p=L^p$ if $1\le p<\infty$, and $C_0$ if $p=\infty$.

If $1\le p\le\infty$ and $f\in L^p$, we define
\be\label{degapproxdef}
E_{\lambda,p}(f)=\inf_{P\in\Pi_\lambda}\|f-P\|_p, \qquad \lambda\in\RR.
\ee
If $r>0$ then the smoothness class $W_{r,p}$ is the set of all $f\in X^p$ such that
\be\label{sobolevnormdef}
\|f\|_{W_{r,p}}=\|f\|_p +\sup_{n\ge 0}2^{nr}E_{2^n,p}(f) <\infty.
\ee

Our main tool in the recuperation of measures is a localized kernel.
Given  a compactly supported function $H :\RR\to\RR$, we define: 
\be\label{lockerndef}
\Phi_n(H;x,y):=\sum_{k=0}^\infty H\left(\frac{\lambda_k}{n}\right)\phi_k(x)\phi_k(y), \qquad n>0, \ x,y\in\XX.
\ee
For $\mu\in C_0^*$, we define formally
\be\label{sigmaopdef}
\sigma_n(H;\mu)(x)=\int_\XX \Phi_n(H;x,y)d\mu(y), \qquad x\in\XX, \ n>0.
\ee
We write
\be\label{fourcoeffdef}
\hat{\mu}(k)=\int_\XX \phi_k(y)d\mu(y), \qquad k=0,1,\cdots.
\ee
Then
\be\label{sigmainfour}
\sigma_n(H;\mu)(x)=\sum_{k=0}^\infty H\left(\frac{\lambda_k}{n}\right)\hat{\mu}(k)\phi_k(x), \qquad n>0, \ x\in\XX.
\ee
We note that $\sigma_n(H;\mu)$ can be identified with the measure $\sigma_n(H;\mu)d\mu^*$. In general,
if $\mu$ is absolutely continuous, so that $d\mu=fd\mu^*$ for some $f\in L^1$, then by an abuse of the notation we write $\hat{f}(k)$ for $\hat{\mu}(k)$, and likewise, $\sigma_n(H;f)$ for $\sigma_n(H;\mu)$.

\bhag{Main results}\label{mainsect}

Our first objective is to estimate the degree of approximation in recuperating a measure $\mu\in C_0^*$ from noisy measurements of the form $\hat{\mu}(k)+\epsilon_k$, for $k$ with $\lambda_k<2^n$. Toward this end,  we fix in the rest of this paper, an infinitely differentiable, even function $h:\RR\to\RR$ such that $h$ is non-increasing on $[0,\infty)$, $h(t)=1$ if $0\le t\le 1/2$, $h(t)=0$ if $t \ge 1$. The constants $c, c_1,\cdots$ will depend upon $h$ as well.

The approximation to $\mu$ is the measure $\nu_n$, defined spectrally by
\be\label{recuperateop}
\widehat{\nu_n}(k)=h(\lambda_k/2^n)\left\{\hat{\mu}(k)+\epsilon_k\right\}, \qquad k=0,1,\cdots.
\ee
We find it convenient to denote the noiseless recuperation measure by $\mu_n$; i.e.,
\be\label{muapproxdef}
\mu_n(B)=\int_B\int_\XX \Phi_{2^n}(h; x,y)d\mu(y)d\mu^*(x)=\int_B \sigma_{2^n}(h;\mu)(x)d\mu^*(x), \qquad n\ge 0,
\ee
for all Borel subsets $B\subseteq \XX$.

The following theorem shows that the rate at which the degree of approximation of $\mu$ by $\{\nu_n\}$ (as a function of $n$), measured in the norm given in Definition~\ref{distdef}, decreases to $0$  depends only on the kernel $G$. There is no natural way to define a smoothness of the measure $\mu$.
\begin{theorem}\label{murecuptheo}
Let $1\le p \le \infty$, $\beta>q/p'$, $G$ be a kernel of type $\beta$, and $\mu\in C_0^*$, $\nu_n$ be defined by \eref{recuperateop}. Let $P_n=\sum_{k:\lambda_k<n}\epsilon_k\phi_k$. Then
\be\label{mudegapprox}
\tn\nu_n-\mu\tn_{G;p} \le c\left\{2^{-n(\beta-q/p')}|\mu|(\XX)+\|P_n\|_1\right\}.
\ee
Moreover, for the high pass filter $G_{hi}(x,y)=G(x,y)-\Phi_{2^n}(hb_{2^n};x,y)$, we have
\be\label{highpassapprox}
\tn\nu_n-\mu\tn_{G_{hi};p} \le c2^{-n(\beta-q/p')}\left\{|\mu|(\XX)+\|P_n\|_1\right\}.
\ee
\end{theorem}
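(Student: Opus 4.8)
The plan is to reduce everything to a statement about diffusion polynomials and the localized kernels $\Phi_{2^n}$, then invoke the estimate already available via Proposition~\ref{eignetprop} (the localization of $\Phi_n(H;\cdot,\cdot)$ coming from \eref{gaussupbd}) together with the norm equivalence in Proposition~\ref{mumetricprop}. First I would unwind the definitions: by \eref{recuperateop}, $\widehat{\nu_n}(k)=h(\lambda_k/2^n)\hat\mu(k)+h(\lambda_k/2^n)\epsilon_k$, so $\nu_n=\mu_n+R_n$ where $\mu_n$ is the noiseless recuperation \eref{muapproxdef} and $R_n$ is the measure with $\widehat{R_n}(k)=h(\lambda_k/2^n)\epsilon_k$; note $R_n$ is a diffusion polynomial supported spectrally on $\lambda_k<2^n$, and in fact $R_n=\sigma_{2^n}(h;P_n)\,d\mu^*$ since $h$ acts as the identity on the frequencies appearing in $P_n$. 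By the triangle inequality for $\tn\cdot\tn_{G;p}$ it then suffices to bound $\tn\mu_n-\mu\tn_{G;p}$ and $\tn R_n\tn_{G;p}$ separately.

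For the first term, the key observation is that $\int_\XX G(\circ,y)\,d(\mu_n-\mu)(y)$ has Fourier coefficients $\big(h(\lambda_k/2^n)-1\big)b(\lambda_k)\hat\mu(k)$, which vanish for $\lambda_k<2^{n-1}$; so this is a ``high-pass'' object. I would write $\int_\XX G(\circ,y)\,d(\mu-\mu_n)(y)=\int_\XX K_n(\circ,y)\,d\mu(y)$ where $K_n(x,y)=\sum_k (1-h(\lambda_k/2^n))b(\lambda_k)\phi_k(x)\phi_k(y)$, recognize $K_n$ as a kernel of the type handled by the localization machinery (its ``mask'' is $(1-h(t/2^n))b(t)$, which decays like $(1+t)^{-\beta}$ and is supported in $t\ge 2^{n-1}$), apply the $L^p$-localization/boundedness estimate for such kernels integrated against a measure — this is exactly the ingredient quoted from \cite{tauberian}/Proposition~\ref{eignetprop} — to get $\|K_n(\circ,y)\|_{\mu^*;\XX,p}$-type control uniform in $y$, and conclude $\|\int K_n(\circ,y)\,d\mu(y)\|_p\le c\,2^{-n(\beta-q/p')}|\mu|(\XX)$ by Minkowski's integral inequality. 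The factor $2^{-n\beta}$ comes from the size of the mask on its support, and the $2^{nq/p'}$ correction is the usual Nikolskii/volume factor in the $L^p$ norm of a degree-$2^n$ kernel; the hypothesis $\beta>q/p'$ is what makes the exponent negative. For the noise term, the same reasoning applies to the kernel with mask $h(t/2^n)b(t)$: this is a kernel of type $\beta$ in the sense of Definition~\ref{eigkerndef}, $R_n$ is exactly the diffusion-polynomial measure with density $\sigma_{2^n}(h;P_n)$, and one gets $\tn R_n\tn_{G;p}\le c\,\|P_n\|_1$ (no $2^{-n\beta}$ gain, since the low frequencies survive), giving \eref{mudegapprox}. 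For the high-pass statement \eref{highpassapprox}, $G_{hi}$ has mask $b(t)-h(t/2^n)b(t)=(1-h(t/2^n))b(t)$, so $\int G_{hi}(\circ,y)\,d R_n(y)$ has coefficients $(1-h(\lambda_k/2^n))h(\lambda_k/2^n)\epsilon_k$, which are supported on $2^{n-1}\le\lambda_k<2^n$ and there one again picks up the $2^{-n\beta}$ factor from the mask; combining with the $\mu$-part (already high-pass, already estimated) yields the common prefactor $2^{-n(\beta-q/p')}\{|\mu|(\XX)+\|P_n\|_1\}$.

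The main obstacle is making the ``$L^p$ norm of the kernel with a decaying, high-pass mask'' estimate precise with the right constants — i.e., verifying that $(1-h(t/2^n))b(t)$ and $h(t/2^n)b(t)$ genuinely qualify for the localization bounds of Proposition~\ref{eignetprop} (checking the required smoothness and decay of the mask, with constants independent of $n$), and then extracting the clean bound $\|\int \Phi(\circ,y)\,d\mu(y)\|_p\le c\,(\text{mask size})\cdot 2^{nq/p'}|\mu|(\XX)$ uniformly in $\mu$. Once that kernel estimate is in hand, the decomposition $\nu_n=\mu_n+R_n$ and the triangle inequality for $\tn\cdot\tn_{G;p}$ make the rest routine bookkeeping on the spectral supports.
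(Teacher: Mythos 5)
Your proposal is correct and follows essentially the same route as the paper: the same split of $\nu_n$ into the noiseless part $\mu_n$ and the noise part with density $\sigma_{2^n}(h;P_n)$, the same identification of the error kernel as $G(\circ,y)-\Phi_{2^n}(hb_{2^n};\circ,y)$, and the same conclusion via Minkowski's integral inequality once the uniform-in-$y$ bounds $\sup_y\|G(\circ,y)\|_p\le c$ and $\sup_y\|G(\circ,y)-\Phi_{2^n}(hb_{2^n};\circ,y)\|_p\le c2^{-n(\beta-q/p')}$ are invoked. The ``main obstacle'' you flag is exactly what \eref{glpuniform} in Proposition~\ref{eignetprop}(b) supplies, so no further mask verification is needed.
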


\begin{rem}\label{candesrem}
{\rm
We compare this theorem with \cite[Theorem~1.2]{candes2013super} described in Section~\ref{super_res_sect}.
 The analogue of the  high pass filter $F_N$ is given by $G_{hi}$.
Note that, unlike \eref{candesest}, the noise term $\|P_n\|_1$  has a decreasing influence in the high pass range.
Analogous to the kernel $F_N$, the kernel $G_{hi}$ gives a lower weight to the higher frequencies, but
 unlike the kernel $F_N$, the kernel $G_{hi}$ includes \textbf{all} the high frequency components.

We note that there is no longer any assumption on the minimal separation among the points in the support of the target measure $\mu$.
An exact recovery is in general impossible, even in the noise-free case.
Our construction in \eref{recuperateop} being general, does not give an exact recuperation also in the case of finitely supported measures without some further processing, which is not within the scope of this paper.
However, the result is applicable for measures defined on a very general space, and does not require the verification of a signature polynomial as in \cite{candes2014_super_math_theory}.
Therefore, we expect that the approximation $\nu_n$ is easier to construct so as to obtain a good approximation, even if no  exact recovery is possible.
\qed}
\end{rem}

\begin{rem}\label{eignetrem}
{\rm
Let $f$ admit a representation of the form
$$
f(x)=\int_\XX G(x,y)d\mu(y)
$$
for some measure $\mu\in C_0^*$.
A comparison of Fourier coefficients shows that
$$
\sigma_{2^n}(f)(x)=\int_\XX G(x,y)d\mu_n(y).
$$
Therefore, Theorem~\ref{murecuptheo} implies 
$$
\|f-\sigma_{2^n}(f)\|_p \le c2^{-n(\beta-q/p')}|\mu|(\XX).
$$
In particular, in the case $p=1$, we get bounds nominally sharper than those in \cite{mauropap}. Rather than assuming a condition on $f$ in terms of pseudo-differential operators (informally, choosing $G$ to be a Green function of a pseudo-differential operator), we allow a more general kernel $G$. Also,  we no longer require the object $\mathcal{D}_G(f)$ defined spectrally by $\widehat{\mathcal{D}_G(f)}(k)=\hat{f}(k)/b(\lambda_k)$, $k=0,1,\cdots$, to be a function in $X^1$, but allow it to be a measure.
It is explained in \cite{eignet, compbio} how to discretize the quantity $\sigma_{2^n}(f)$ based on values of $f$ at scattered data points.
This leads to a constructive procedure to obtain an approximation to $f$ by sums of the form $\sum_{j=1}^M a_jG(\circ, y_j)$ \cite{eignet}. However, the error bounds are not dimension independent.
Dimension independent bounds can be obtained using concentration inequalities in a probabilistic sense, but then the proof is not constructive.
\qed}

\end{rem}
Next, we address the question whether one can improve upon the bounds in \eref{mudegapprox}. For simplicity, we consider the noiseless case; i.e., assume in the sequel that $P_n\equiv 0$.
The first theorem below states that one cannot improve the factor of $2^{-n(\beta-q/p')}$ to $2^{-n(\beta+r)}$ except in ``trivial'' cases; i.e., when $\mu=fd\mu^*$ for some $f\in W_{\gamma,p}$, so that results from function approximation are applicable directly.
Thus, in the case when $p=1$, the estimate \eref{mudegapprox} cannot be improved.

\begin{theorem}\label{convtheo}
Let $1\le p\le\infty$, $\beta>q/p'$, $r>0$, $G$ be a kernel of type $\beta$, $\mu\in C_0^*$ and  for each $n\ge 1$, $\mu_n$ be defined by \eref{muapproxdef}. Then the following are equivalent:\\
{\rm (a)} There exists $f\in W_{r,p}$ such that $d\mu=fd\mu^*$.\\
{\rm (b)} We have
\be\label{finedegapprox}
\sup_{n\ge 0}2^{n(\beta+r)}\tn\mu_n-\mu\tn_{G;p} <\infty.
\ee
\end{theorem}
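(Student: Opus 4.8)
The plan is to prove the equivalence by relating $\tn\mu_n - \mu\tn_{G;p}$ to the tail behaviour of the Fourier coefficients $\hat\mu(k)$, which in turn is controlled by a Littlewood–Paley–type decomposition. First I would record the spectral identity: by \eref{sigmainfour} and \eref{muapproxdef}, $\widehat{\mu_n - \mu}(k) = (h(\lambda_k/2^n) - 1)\hat\mu(k)$, which vanishes for $\lambda_k < 2^{n-1}$ and equals $-\hat\mu(k)$ for $\lambda_k \ge 2^n$. Applying the kernel $G$ of type $\beta$, the object $\int_\XX G(\circ,y)\,d(\mu_n-\mu)(y)$ has Fourier coefficients $b(\lambda_k)(h(\lambda_k/2^n)-1)\hat\mu(k)$, so $\tn\mu_n-\mu\tn_{G;p}$ is the $X^p$-norm of a diffusion-polynomial-like object supported in frequencies $\lambda_k \ge 2^{n-1}$. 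The natural strategy is then a telescoping/dyadic argument: write the relevant spectral multiplier as a sum of pieces localized to dyadic annuli $2^{m-1} \le \lambda_k < 2^{m+1}$ for $m \ge n$, estimate each piece using the localization of $\Phi_{2^m}(h;\cdot,\cdot)$ (via Remark~\ref{ddsrem} and Proposition~\ref{eignetprop}), and sum a geometric series in $2^{-m(\beta+r)}$.

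For the direction (a) $\Rightarrow$ (b): if $d\mu = f\,d\mu^*$ with $f\in W_{r,p}$, then $\mu_n - \mu$ corresponds to the function $\sigma_{2^n}(h;f) - f$, and classical approximation theory (the Jackson-type half of the $W_{r,p}$ characterization) gives $\|\sigma_{2^n}(h;f)-f\|_p \le c\,2^{-nr}\|f\|_{W_{r,p}}$. Convolving with the type-$\beta$ kernel $G$ gains an additional factor $2^{-n\beta}$ on the high-frequency tail — this is exactly the estimate already proved in Theorem~\ref{murecuptheo} applied to the measure $(f - \sigma_{2^{n}}(h;f))d\mu^*$, whose Fourier coefficients live in frequencies $\gtrsim 2^{n-1}$ and which has $L^p$ norm $\le c\,2^{-nr}\|f\|_{W_{r,p}}$; combining, one gets $\tn\mu_n-\mu\tn_{G;p} \le c\,2^{-n(\beta+r)}\|f\|_{W_{r,p}}$, which is \eref{finedegapprox}. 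One should double-check that the constant in the convolution-with-$G$ step does not degrade the frequency-localized bound; this follows from the $L^p$-boundedness of the operator with multiplier $b(\lambda_k)$ restricted to a dyadic block, of norm $\sim 2^{-m\beta}$, which is standard given the mask structure in Definition~\ref{eigkerndef} and the Gaussian bound \eref{gaussupbd}.

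For the converse (b) $\Rightarrow$ (a), which I expect to be the main obstacle: from \eref{finedegapprox} I need to extract, first, that $\mu$ is absolutely continuous, and second, that its density lies in $W_{r,p}$. The idea is to consider the formal density $f := \mathcal{D}_G$ applied — more precisely, to reconstruct $f$ from the data by inverting $b(\lambda_k)$ on each dyadic block. Define $\tau_m$ to be the diffusion polynomial with $\widehat{\tau_m}(k) = (h(\lambda_k/2^m) - h(\lambda_k/2^{m-1}))\hat\mu(k)$; the hypothesis \eref{finedegapprox} together with the block structure of $G$ (each block of $b$ is invertible with $\|b\|\sim 2^{-m\beta}$ on that block, and so is its inverse up to the same factor) yields $\|\tau_m\|_p \le c\,2^{-mr}\cdot(\text{const})$, summably after weighting. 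Then $f := \sum_m \tau_m$ converges in $X^p$, one checks $\widehat{f\,d\mu^*}(k) = \hat\mu(k)$ for all $k$ so that $d\mu = f\,d\mu^*$ by the fundamentality of $\{\phi_k\}$, and finally $E_{2^n,p}(f) \le \sum_{m>n}\|\tau_m\|_p \le c\,2^{-nr}$, giving $f\in W_{r,p}$ via \eref{sobolevnormdef}. The delicate points are: justifying the block-wise inversion of the multiplier $b(\lambda_k)$ as a bounded operator on $X^p$ with the correct norm (again leaning on Definition~\ref{eigkerndef} and the localization estimates from Proposition~\ref{eignetprop}), and showing that the partial sums of $\sum_m\tau_m$ are Cauchy in $X^p$ rather than merely bounded — here the case $p=\infty$, where one works in $C_0$ and must be careful about the "vanishing at infinity" condition, requires the most attention, and may need the hypothesis $\beta > q/p'$ to close the geometric series with room to spare.
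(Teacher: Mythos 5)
Your proposal is correct and follows essentially the same route as the paper: both directions rest on the dyadic decomposition into blocks $\sigma_{2^m}(g;\mu)$ with $g(t)=h(t)-h(2t)$ and on the fact that the block multiplier $g b_{2^m}$ (and, since $1/b$ is a mask of type $-\beta$, its blockwise inverse) has $L^p$ operator norm $\sim 2^{-m\beta}$ (resp.\ $\sim 2^{m\beta}$), which is exactly the content of the paper's Lemma~\ref{convtheolemma}, and your reconstruction $f=\sum_m\tau_m$ is precisely the paper's $\sigma_1(h;\mu)+\sum_{m}\sigma_{2^m}(g;\mu)$, with the same conclusion $E_{2^n,p}(f)\le c2^{-nr}$. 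The only slip is your passing suggestion to apply Theorem~\ref{murecuptheo} to $(f-\sigma_{2^n}(h;f))\,d\mu^*$ in the forward direction, which would cost a factor $2^{nq/p'}$ and use the $L^1$ rather than the $L^p$ norm; the block-multiplier argument you state immediately afterwards is the correct (and the paper's) mechanism, and your worries about Cauchyness and the case $p=\infty$ are resolved automatically since $\|\tau_m\|_p\le c2^{-mr}$ sums absolutely and each $\tau_m\in\Pi_\infty$.
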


Another way to examine a possible improvement in \eref{mudegapprox} is using the notion of \emph{non-linear widths}. We note that the recuperation measure $\mu_n$ depends upon the parameters $\hat{\mu}(k)$, for $k$ such that $\lambda_k<2^n$; i.e., as many parameters as the dimension of $\Pi_{2^n}$.
In most manifolds, the eigenfunctions $\phi_k$ of the Laplace-Beltrami operator satisfy an additional estimate given in \eref{phinlowbd} below (see \cite{frankbern, modlpmz} for a fuller discussion). In the general set up which we are working with, it is therefore reasonable to assume that there exists  $\gamma>0$ such that
\begin{equation}\label{phinlowbd}
\min_{y\in \mathbb{B}(x,\gamma/m)}|\Phi_m(x,y)| \ge cm^q, \qquad x\in \mathbb{X}, \ m\ge 1.
\end{equation}
Under this assumption, it is not difficult to verify that the dimension of $\Pi_{2^n}$ is $\sim 2^{nq}$.
Thus, in terms of the number $M$ of parameters used in the recuperation,
the bound \eref{mudegapprox} for the case $p=1$ is $\O(M^{-\beta/q})$.
We now proceed to show that this is the best possible.

Let $\mathcal{K}$ be a weak-star compact subset of $C_0^*$.
We denote by $\mathcal{S}$ the set of all weak-star continuous mappings from $\mathcal{K}\to\RR^M$ (parameter selection maps).
An algorithm is a mapping
 $A: \RR^M\to C_0^*$. Thus, for any algorithm $A$ and parameter selection $S$, and $\mu\in \mathcal{K}$,  $A(S(\mu))\in C_0^*$ is an attempted reconstruction of $\mu$ from the data $S$ using the algorithm $A$. We define
$$
\mbox{Err}_M(G,p;\mathcal{K},A,S)=\sup_{\mu\in \mathcal{K}}\tn A(S(\mu))-\mu\tn_{G;p},
$$
and the nonlinear width of $\mathcal{K}$ in the sense of $\tn\circ\tn_{G;p}$ by
\be\label{widthdef}
d_M(G,p;\mathcal{K})= \inf_{A: \RR^M\to C_0^*}\inf_{S\in\mathcal{S}}\mbox{Err}_M(G,p;\mathcal{K},A,S), \qquad M\ge 1.
\ee

\begin{theorem}\label{widththeo}
Let $\XX$ be compact, $\mu^*(\XX)=1$, $\beta>0$, $G$ be a kernel of type $\beta$. We assume further that there exists  $\gamma>0$ such that \eref{phinlowbd} holds. Let
\be\label{unitballdef}
\mathcal{K}=\{\nu\in C_0^* : |\nu|(\XX)\le 1\}.
\ee
Then for integer $M\ge 1$,
\be\label{widthest}
d_M(G,1;\mathcal{K}) \ge cM^{-\beta/q}.
\ee
\end{theorem}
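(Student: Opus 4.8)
\ \ The plan is to prove \eref{widthest} by the Borsuk--Ulam (``carving'') technique for non-linear widths (in the spirit of \cite{devore1989optimal}), adapted to the norm $\tn\circ\tn_{G;1}$.

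First I would fix the scale. Take $n=n(M)$ to be the least non-negative integer with $\dim\Pi_{2^n}\ge M+1$. The lower bound $\dim\Pi_{2^n}\ge c2^{nq}$, which is what is needed, follows by integrating \eref{phinlowbd}: by orthonormality $\int_\XX\Phi_{2^n}(h;x,x)\,d\mu^*(x)=\sum_k h(\lambda_k/2^n)\le\dim\Pi_{2^n}$, while $\Phi_{2^n}(h;x,x)\ge c\,2^{nq}$ for every $x$ by \eref{phinlowbd} (taking $y=x$), so the integral is $\ge c2^{nq}\mu^*(\XX)=c2^{nq}$. Minimality of $n$ then forces $2^n\le cM^{1/q}$, hence $2^{-n\beta}\ge cM^{-\beta/q}$; put $L=\dim\Pi_{2^n}$ ($\ge M+1$).

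The analytic core is a lower Bernstein-type bound:
\[
\tn Pd\mu^*\tn_{G;1}\ \ge\ c\,2^{-n\beta}\|P\|_1\qquad\text{for all }P\in\Pi_{2^n}.
\]
Indeed, $\int_\XX G(\circ,y)P(y)\,d\mu^*(y)=Q$ with $Q\in\Pi_{2^n}$ and $\widehat Q(k)=b(\lambda_k)\widehat P(k)$, i.e. $P=\mathcal{D}_G(Q)$, so the claim is the Bernstein inequality $\|\mathcal{D}_G(Q)\|_1\le c2^{n\beta}\|Q\|_1$ on $\Pi_{2^n}$. The delicate point is that the spectral multiplier $1/b(\lambda_k)$ is of order $2^{n\beta}$ but has badly growing high-order derivatives near the low-frequency end. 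I would handle this by a Littlewood--Paley splitting, writing $Q$ as a sum of the $O(n)$ dyadic pieces $\sigma_1(h;Q)$ and $\sigma_{2^{j+1}}(h;Q)-\sigma_{2^j}(h;Q)$, $0\le j\le n$. On the $j$-th piece the rescaled multiplier $s\mapsto 2^{-j\beta}/b(2^{j}s)$ is supported where $s\sim1$ and is smooth there with all derivatives bounded uniformly in $j$ (using $F_b\ge c_1(b)>0$); together with the uniform $L^1\!\to\!L^1$ boundedness of the operators $\sigma_m(\Psi;\cdot)$ for smooth $\Psi$ supported in a fixed compact set (a standard consequence of \eref{gaussupbd} via the localization of the kernels $\Phi_m$, cf. Remark~\ref{ddsrem}), this bounds the $j$-th contribution to $\|\mathcal{D}_G(Q)\|_1$ by $c2^{j\beta}\|Q\|_1$. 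Summing $\sum_{j\le n}2^{j\beta}\sim 2^{n\beta}$ --- here $\beta>0$ enters --- gives the inequality.

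The topological step is then routine. Each $P$ in the unit sphere $\mathcal{S}'=\{P\in\Pi_{2^n}:\|P\|_1=1\}$ gives $Pd\mu^*\in\mathcal{K}$, and $\iota\colon P\mapsto Pd\mu^*$ is linear and continuous from $\mathcal{S}'$ (norm topology) into $\mathcal{K}$ (weak-star topology), since $\bigl|\int_\XX g\,(P-P')\,d\mu^*\bigr|\le\|g\|_\infty\|P-P'\|_1$ for $g\in C_0$. Intersecting $\mathcal{S}'$ with an $(M+1)$-dimensional subspace of $\Pi_{2^n}$ (possible since $L\ge M+1$) identifies it homeomorphically with the $M$-sphere $S^M$, the antipodal map corresponding to $P\mapsto-P$. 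Given any $S\in\mathcal{S}$ and any algorithm $A\colon\RR^M\to C_0^*$, the map $S\circ\iota\colon S^M\to\RR^M$ is continuous, so by the Borsuk--Ulam theorem there is $P_0$ with $\|P_0\|_1=1$ and $S(P_0d\mu^*)=S(-P_0d\mu^*)$; hence $A(S(P_0d\mu^*))=A(S(-P_0d\mu^*))=:\rho$. By the triangle inequality, the linearity of $\iota$, and the Bernstein-type bound,
\[
\mbox{Err}_M(G,1;\mathcal{K},A,S)\ \ge\ \tfrac12\tn 2P_0d\mu^*\tn_{G;1}\ =\ \tn P_0d\mu^*\tn_{G;1}\ \ge\ c\,2^{-n\beta}\ \ge\ c\,M^{-\beta/q}.
\]
Taking the infimum over $A$ and $S$ proves \eref{widthest}. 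The one place where real work is needed is the Bernstein-type lower bound: inverting the mask $b$ on $\Pi_{2^n}$ with the correct operator-norm growth, absorbing the low-frequency blow-up of $1/b$ through the dyadic decomposition and $\beta>0$; Steps 1 and 3 are soft.
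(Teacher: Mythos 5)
Your proof is correct, but it follows a genuinely different route from the paper's. The paper reduces \eref{widthest} to a lower bound on the Bernstein width via the cited inequality $d_M(G,p;\mathcal{K})\ge b_M(G,p;\mathcal{K})$ from \cite{devore1989optimal} (equation \eref{howardmicchtheo}), and then exhibits as the extremal $(M+1)$-dimensional subspace the span of Dirac deltas $\delta_{y_k}$ at a maximal $cM^{-1/q}$-separated set; the key inequality is $\sum_k|a_k|\le c\,\eta(\C)^{-\beta}\|\sum_k a_kG(\circ,y_k)\|_1$, i.e.\ Proposition~\ref{eignetprop}(d), which converts total variation into the $\tn\circ\tn_{G;1}$ norm at scale $M^{\beta/q}$. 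You instead run the Borsuk--Ulam argument directly (this is exactly the proof of \eref{howardmicchtheo}, so you are re-deriving a step the paper uses as a black box), and you take as extremal subspace the absolutely continuous measures $P\,d\mu^*$ with $P\in\Pi_{2^n}$; your key inequality $\tn P\,d\mu^*\tn_{G;1}\ge c2^{-n\beta}\|P\|_1$ is precisely the Bernstein estimate $\|\mathcal{D}_GQ\|_1\le cn^\beta\|Q\|_1$ of Proposition~\ref{eignetprop}(c), which you could simply have cited rather than re-proving by Littlewood--Paley decomposition (your sketch of that decomposition is sound, and is essentially how \eref{bernstein} is proved in \cite{eignet}). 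The trade-offs: the paper's Dirac-delta subspace uses \eref{phinlowbd} through Proposition~\ref{eignetprop}(d) and has the advantage of being reusable verbatim for Theorem~\ref{discmasstheo} (where discreteness of the measures is essential); your polynomial-density subspace uses \eref{phinlowbd} only for the soft dimension count $\dim\Pi_{2^n}\ge c2^{nq}$ and shows the additional fact that the width lower bound is already attained on the absolutely continuous part of $\mathcal{K}$ -- but it would not transfer to $\mathcal{K}_\eta$. Both arguments ultimately rest on quoted machinery from \cite{eignet}, just different parts of Proposition~\ref{eignetprop}.
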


We end this section with a width result that demonstrates that the minimal separation is an essential barrier to the recuperation of finitely supported measures, not just from the Fourier information, but from any robust parameter selection.
Toward this end, let $\eta>0$ and
\be\label{discmeasclass}
\mathcal{K}_\eta=\{\mu = \sum_j a_j\delta_{x_j}: a_j\in\RR,\ x_j\in \XX, \ \sum_j |a_j|\le 1,\ \min_{j\not=k}d(x_j,x_k)\ge \eta\}.
\ee
Although we do not prescribe the exact number of point masses in the definition above, when $\XX$ is compact, then a volume argument shows that this number cannot exceed $c\eta^{-q}$.
It is not difficult to show in this case that $\mathcal{K}_\eta$ is a compact subset of $C_0^*$.

\begin{theorem}\label{discmasstheo}
Let $\XX$ be compact, $\mu^*(\XX)=1$, $\beta>0$, $G$ be a kernel of type $\beta$. We assume further that \eref{phinlowbd} holds. Then for integer $M\sim\eta^{-q/\beta}$,
\be\label{discwidth}
d_M(G,1;\mathcal{K}_\eta)\ge c\eta.
\ee
\end{theorem}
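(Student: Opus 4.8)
My approach follows the Borsuk--Ulam scheme for lower bounds on nonlinear widths (the same scheme used for Theorem~\ref{widththeo}); the only genuinely new point is that, when $M\sim\eta^{-q/\beta}$, the hard instances in that scheme can be taken inside the restricted class $\mathcal{K}_\eta$.

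\emph{Set-up.} Let $m$ be an integer determined by the target accuracy $m^{-\beta}\sim\eta$, i.e.\ $m\sim\eta^{-1/\beta}$. Fix a constant $c_0$ (to be pinned down in the last step) and let $\{y_1,\dots,y_{M+1}\}$ be a subset of a maximal $(c_0/m)$-separated set in $\XX$. This is possible because, under \eref{phinlowbd}, $\dim\Pi_m\sim m^q$, a maximal $(c_0/m)$-separated set has $\sim m^q$ points, and $M+1\sim\eta^{-q/\beta}\sim m^q$; moreover, for $\eta$ small the separation $c_0/m\sim c_0\eta^{1/\beta}$ exceeds $\eta$, so these points are $\eta$-separated. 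Hence the measures
\[
\Psi(u)=\frac{1}{\|u\|_1}\sum_{i=1}^{M+1}u_i\,\delta_{y_i},\qquad u\in\SS^M:=\{u\in\RR^{M+1}:\|u\|_2=1\},
\]
all lie in $\mathcal{K}_\eta$, and $\Psi$ is odd and weak-$*$ continuous ($\|u\|_1\ge\|u\|_2=1$, so the normalisation is harmless).

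\emph{Reduction to a lower bound.} It suffices to establish
\be\label{prop-keylb}
\tn\Psi(u)\tn_{G;1}\ \ge\ c\,m^{-\beta}\ \sim\ c\,\eta\qquad\text{for every }u\in\SS^M.
\ee
Granting \eref{prop-keylb}, the conclusion follows exactly as for Theorem~\ref{widththeo}: given $S\in\mathcal{S}$ (a weak-$*$ continuous map $\mathcal{K}_\eta\to\RR^M$), the composite $S\circ\Psi:\SS^M\to\RR^M$ is continuous, so by Borsuk--Ulam there is $u_0$ with $S(\Psi(u_0))=S(\Psi(-u_0))=S(-\Psi(u_0))$; then for any algorithm $A:\RR^M\to C_0^*$, writing $g:=A(S(\Psi(u_0)))=A(S(-\Psi(u_0)))$ and using $-\Psi(u_0)\in\mathcal{K}_\eta$,
\[
2\tn\Psi(u_0)\tn_{G;1}=\tn 2\Psi(u_0)\tn_{G;1}\le\tn\Psi(u_0)-g\tn_{G;1}+\tn g+\Psi(u_0)\tn_{G;1}\le 2\,\mbox{Err}_M(G,1;\mathcal{K}_\eta,A,S),
\]
and taking the infimum over $A,S$ in \eref{widthdef} yields $d_M(G,1;\mathcal{K}_\eta)\ge c\,m^{-\beta}\ge c'\eta$.

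\emph{The lower bound (main obstacle).} To prove \eref{prop-keylb} I would use duality. Since $G$ is self-adjoint and of type $\beta>0$, $\tn\Psi(u)\tn_{G;1}=\sup_{\|\psi\|_\infty\le1}|\langle\Psi(u),G\psi\rangle|=\|u\|_1^{-1}\sup_{\|\psi\|_\infty\le1}|\sum_i u_i(G\psi)(y_i)|$, so it is enough to produce a single $\psi$ with $\|\psi\|_\infty\le1$ and $(G\psi)(y_i)\,\mathrm{sgn}(u_i)\ge c\,m^{-\beta}$ for all $i$. I would take $\psi=(c\,m^{\beta})^{-1}\mathcal{D}_G(P)$, where $\mathcal{D}_G$ is the multiplier with $\widehat{\mathcal{D}_G(f)}(k)=\hat f(k)/b(\lambda_k)$ and $P\in\Pi_m$ is a diffusion polynomial with $\|P\|_\infty\le c$ which matches the signs of $u$ on the net, i.e.\ $P(y_i)\,\mathrm{sgn}(u_i)\ge c$ for all $i$; then $G\psi=(c\,m^{\beta})^{-1}P$ does what is required. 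This needs two ingredients: (i) a Bernstein-type inequality $\|\mathcal{D}_G(P)\|_\infty\le c\,m^{\beta}\|P\|_\infty$ for $P\in\Pi_m$, valid because $b$ is a mask of type $\beta$ (cf.\ \cite{eignet}); and (ii) the sign-matching polynomial $P$ itself. For (ii) the natural candidate is $P=\sum_j\mathrm{sgn}(u_j)\,w_j\,\Phi_m(h;\circ,y_j)$ with Marcinkiewicz--Zygmund quadrature weights $w_j\sim m^{-q}$ furnished by the localization of the kernels $\Phi_m$ that follows from the Gaussian bound \eref{gaussupbd} via \eref{phinlowbd} (Proposition~\ref{eignetprop}; see also \cite{modlpmz}): the diagonal contribution to $P(y_i)\mathrm{sgn}(u_i)$ is $w_i\,\Phi_m(h;y_i,y_i)\ge c$ by \eref{phinlowbd}, while the sum of the off-diagonal contributions is kept strictly below it using the decay of $\Phi_m$ together with the $(c_0/m)$-separation of the $y_j$, which fixes $c_0$. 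Making this balancing rigorous, i.e.\ building one degree-$(<m)$ diffusion polynomial that realizes a prescribed sign pattern on the net while keeping $\|P\|_\infty=O(1)$, is the technical heart of the proof and the step I expect to be the real obstacle; the auxiliary facts (i) and the quadrature estimates behind (ii) are essentially available from the companion results cited in the paper.
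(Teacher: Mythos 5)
Your proposal follows essentially the same route as the paper. In both arguments the hard instances are normalized combinations $\sum_j u_j\delta_{y_j}$ of point masses at $M+1$ points of a maximal separated net, and everything reduces to the lower bound $\tn\sum_j u_j\delta_{y_j}\tn_{G;1}\ge c\,\eta(\C)^{\beta}\sum_j|u_j|$. The differences are packaging. The paper does not run the antipodality argument by hand: it invokes the inequality $d_M\ge b_M$ between the nonlinear and Bernstein widths (\eref{howardmicchtheo}, from \cite{devore1989optimal}) and observes that the span of the $\delta_{y_k}$ already built in the proof of Theorem~\ref{widththeo} lies in the right place once the net's separation is at least $\eta$; your explicit Borsuk--Ulam reduction is just the standard proof of that inequality and is correct. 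More importantly, the step you single out as ``the technical heart'' and ``the real obstacle'' --- building a sign-matching diffusion polynomial so as to prove $\tn\Psi(u)\tn_{G;1}\ge c\,m^{-\beta}$ by duality --- is not an obstacle at all: the inequality you need is exactly \eref{coeffineq}, i.e.\ Proposition~\ref{eignetprop}(d), which the paper imports from \cite[Theorem~3.4]{eignet}; citing it (as the paper does) closes your proof, and your duality/Bernstein/quadrature sketch is essentially a re-derivation of that result. One small caution: your assertion that the separation $c_0/m\sim c_0\eta^{1/\beta}$ ``exceeds $\eta$ for $\eta$ small'' holds only when $\beta\ge 1$; for $\beta<1$ one must take the separation of order $\max(\eta,\eta^{1/\beta})$ to remain inside $\mathcal{K}_\eta$, a point the paper's one-line proof also finesses by imposing $(\kappa_1(M+1))^{-1/q}\ge\eta$ directly, which constrains the admissible $M$ in the same way.
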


\begin{rem}\label{discwidthrem}
{\rm
We remark that $d_m(G;\mathcal{K}_\eta)$ is a decreasing function of $m$. Therefore, the estimate \eref{discwidth} shows a lower limit on how accurately a finitely supported measure with the minimal separation of its support equal to $\eta$ can be approximated using $\le c\eta^{-q/\beta}$ continuously selected parameters.
\qed}
\end{rem}

\begin{rem}\label{pronyrem}
{\rm
In the case when $\XX=\TT$,  and $\mu$ is measure supported on $N$ points, then the Prony method can recuperate the measure exactly using $2N+1$ parameters, regardless of minimal separation among the points.
This is not a contradiction to Theorem~\ref{discmasstheo}, which refers to the worst case error for approximating measures in $\mathcal{K}_\eta$.
For any $\eta>0$, the class $\mathcal{K}_\eta$ contains a measure supported on $N\sim \eta^{-1}$ points and for this measure, $2N+1\sim  c\eta^{-1}$.\qed
}
\end{rem}

\bhag{Proofs}\label{pfsect}

In the sequel, if $N>0$, we will write $b_N(t)=b(Nt)$.  If $\C\subset\XX$ is a finite set, we define
\be\label{minsepdef}
\eta(\C)=\min_{x,y\in\C, x\not=y}d(x,y).
\ee
For $P\in\Pi_\infty$, we define
\be\label{Gderdef}
\mathcal{D}_GP(x)=\sum_k \frac{\hat{P}(k)}{b(\lambda_k)}\phi_k(x), \qquad x\in \XX,
\ee
so that
\be\label{PGinversion}
P(x)=\int_\XX G(x,y)\mathcal{D}_GP(y)d\mu^*(y), \qquad P\in \Pi_\infty, \ x\in\XX.
\ee
In the sequel, we write $g(t)=h(t)-h(2t)$, $t\in\RR$.

We recall the following results from \cite{eignet}. Although the set up there is  that of a compact smooth manifold without boundary, the proofs are verbatim the same for admissible spaces.

\begin{prop}\label{eignetprop}
Let $1\le p\le \infty$, $\beta\in\RR$, $b$ be a mask of type $\beta$.\\
{\rm (a)} We have
\be\label{btimesgnorm}
\sup_{x\in \XX}\|\Phi_{2^n}(g b_{2^n};x,\circ)\|_p \le c2^{-n(\beta-q/p')}, \qquad \sup_{x\in \XX}\|\Phi_{2^n}(g ;x,\circ)\|_p \le c2^{nq/p'}
\ee
{\rm (b)} If $\beta>q/p'$ then for every $y\in\XX$, there exists $\psi_y:=G(\circ,y)\in X^p$ such that $\widehat{\psi_y}(k) =b(\lambda_k)\phi_k(y)$, $k=0,1,\cdots$. We have
\be\label{glpuniform}
\sup_{y\in\XX}\|G(\circ,y)\|_p \le c, \quad \sup_{y\in\XX}\|G(\circ,y)-\Phi_{2^n}(hb_{2^n};\circ,y)\|_p \le c2^{-n(\beta-q/p')}.
\ee
{\rm (c)} If $\beta>0$, $n\ge 1$,  $P\in\Pi_n$ then
\be\label{bernstein}
\|\mathcal{D}_GP\|_p \le cn^\beta\|P\|_p.
\ee
{\rm (d)} If $\beta>0$, $\XX$ is compact, and \eref{phinlowbd} holds, then for any $M\ge 1$, $a_1,\cdots,a_{M+1}\in \RR$, $\C=\{y_1,\cdots,y_{M+1}\}\subset \XX$,
\be\label{coeffineq}
\sum_{k=1}^{M+1} |a_k| \le c\eta(\C)^{-\beta}\left\|\sum_{k=1}^{M+1} a_kG(\circ,y_k)\right\|_1.
\ee
\end{prop}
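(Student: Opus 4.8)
The plan is to derive all four parts from a single engine: the localization estimate for the kernels $\Phi_n(H;\cdot,\cdot)$, which condition \eref{gaussupbd} yields through the Tauberian theorem cited in Remark~\ref{ddsrem}. Concretely, I would use that for any even, $S$-times continuously differentiable, compactly supported $H$ one has $|\Phi_n(H;x,y)|\le c\|H\|_{C^S}n^q(1+nd(x,y))^{-S}$, with $S$ as large as the smoothness permits. Combined with the volume bound \eref{ballmeasurecond}, a dyadic-annulus decomposition of $\XX$ around $x$ turns this pointwise decay into $L^p$ bounds: splitting $\int_\XX(1+nd(x,y))^{-Sp}d\mu^*(y)$ over $\BB(x,1/n)$ and the annuli $\{2^{l-1}/n\le d(x,y)<2^l/n\}$ and using $\mu^*(\BB(x,r))\le\kappa_1 r^q$ gives a bound $cn^{-q}$ once $Sp>q$, whence $\|\Phi_n(H;x,\circ)\|_p\le c\|H\|_{C^S}n^{q/p'}$. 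For part (a) I apply this with $H=g$, where $\|g\|_{C^S}\le c$, to get the second estimate; for $H=gb_{2^n}$ I note that on $\supp g$ one has $2^nt\sim 2^n$, so $b(2^nt)=(1+2^nt)^{-\beta}F_b(\log 2^nt)$ and every $t$-derivative up to order $S$ is $O(2^{-n\beta})$ (each factor $2^n$ from differentiating $b(2^n\cdot)$ is offset by the decay of the derivatives of $b$), i.e. $\|gb_{2^n}\|_{C^S}\le c2^{-n\beta}$; feeding this amplitude into the $L^p$ estimate yields the first estimate $\sup_x\|\Phi_{2^n}(gb_{2^n};x,\circ)\|_p\le c2^{-n(\beta-q/p')}$.

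For part (b) I would use the telescoping identity $h(\lambda_k/2^{n+1})-h(\lambda_k/2^n)=g(\lambda_k/2^{n+1})$, which follows from $g=h(\cdot)-h(2\cdot)$ and gives $\Phi_{2^{n+1}}(hb_{2^{n+1}};\cdot,y)-\Phi_{2^n}(hb_{2^n};\cdot,y)=\Phi_{2^{n+1}}(gb_{2^{n+1}};\cdot,y)$, hence $G(\circ,y)=\Phi_1(hb;\circ,y)+\sum_{n\ge 0}\Phi_{2^{n+1}}(gb_{2^{n+1}};\circ,y)$. Since $\beta>q/p'$, part (a) makes this a geometrically convergent series in $X^p$, uniformly in $y$; completeness of $X^p$ then defines $\psi_y=G(\circ,y)\in X^p$ with $\sup_y\|\psi_y\|_p\le c$, and pairing the limit with each $\phi_k\in L^{p'}$ identifies $\widehat{\psi_y}(k)=b(\lambda_k)\phi_k(y)$. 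The tail of the series from index $n$ is exactly $G(\circ,y)-\Phi_{2^n}(hb_{2^n};\circ,y)$, and summing the geometric bound gives the second estimate in \eref{glpuniform}. For part (c) I pick $\tilde h$ smooth with $\tilde h\equiv 1$ on $[0,1]$, $\supp\tilde h\subset[0,2]$, and set $H^\sharp(t)=\tilde h(t)/b(nt)$; because $\tilde h(\lambda_k/n)=1$ on $\supp\hat P$, one has $\mathcal{D}_GP(x)=\int_\XX\Phi_n(H^\sharp;x,y)P(y)d\mu^*(y)$. On $[0,2]$, $b(nt)\ge cn^{-\beta}$ with controlled derivatives, so $\|H^\sharp\|_{C^S}\le cn^\beta$ and the $L^1$ case of the localization bound gives $\sup_x\|\Phi_n(H^\sharp;x,\circ)\|_1\le cn^\beta$; symmetry of the kernel and Schur's test then bound the operator on every $L^p$ by $cn^\beta$, which is \eref{bernstein}.

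For part (d) I set $m=A/\eta(\C)$ with $A$ a large constant, abbreviate $\eta=\eta(\C)$, $F=\sum_k a_kG(\circ,y_k)$. A Parseval computation with $\widehat{\psi_{y_k}}(k')=b(\lambda_{k'})\phi_{k'}(y_k)$ and $\widehat{\mathcal{D}_G\Phi_m(h;\cdot,y_j)}(k')=b(\lambda_{k'})^{-1}h(\lambda_{k'}/m)\phi_{k'}(y_j)$ yields the reproducing relation $\int_\XX\psi_{y_k}\,\mathcal{D}_G\Phi_m(h;\cdot,y_j)\,d\mu^*=\Phi_m(h;y_k,y_j)$. Taking the test function $\Theta=\sum_j\mathrm{sign}(a_j)\mathcal{D}_G\Phi_m(h;\cdot,y_j)$ gives $\int_\XX F\Theta\,d\mu^*=\sum_k a_k\sum_j\mathrm{sign}(a_j)\Phi_m(h;y_k,y_j)$; the diagonal is $\sum_k|a_k|\Phi_m(h;y_k,y_k)\ge cm^q\sum_k|a_k|$ by \eref{phinlowbd}, while localization gives $|\Phi_m(h;y_k,y_j)|\le cm^q(1+md(y_k,y_j))^{-S}$, and summing over the $\eta$-separated points (at most $c(l+1)^q$ in each annulus $\{l\eta\le d<(l+1)\eta\}$, where $md\sim Al$) produces an off-diagonal total $\le cm^qA^{-S}\sum_{l\ge 1}(l+1)^q l^{-S}\le\tfrac12 cm^q\sum_k|a_k|$ once $S>q+1$ and $A$ is large. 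Hence $\int_\XX F\Theta\,d\mu^*\ge\tfrac12 cm^q\sum_k|a_k|$. Writing $\Theta=\sum_j\mathrm{sign}(a_j)\Phi_m(H^\flat;\cdot,y_j)$ with $H^\flat(t)=h(t)/b(mt)$ and $\|H^\flat\|_{C^S}\le cm^\beta$, localization and the same packing bound give $\|\Theta\|_\infty\le cm^{\beta+q}$; combining $\tfrac12 cm^q\sum|a_k|\le\|F\|_1\|\Theta\|_\infty\le cm^{\beta+q}\|F\|_1$ with $m\sim\eta^{-1}$ gives \eref{coeffineq}.

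The hardest part will be part (d): the argument hinges on \emph{diagonal dominance}, which forces the correct choice of scale $m\sim 1/\eta$ with a sufficiently large constant $A$ so that the off-diagonal contributions are genuinely dominated by the diagonal lower bound \eref{phinlowbd}. Balancing these requires both the sharp localization (with $S>q+1$) and the packing bound on the number of $\eta$-separated points in a ball; the latter tacitly uses the volume regularity of $\mu^*$ standard for these spaces, and keeping the constant in \eref{coeffineq} independent of the cardinality of $\C$ is the delicate point. The localization estimate itself, imported from \cite{tauberian}, is the true engine behind every part and would be the main technical input if it were to be proved from scratch rather than cited.
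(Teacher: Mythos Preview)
Your sketch is essentially a reconstruction of the arguments in \cite{eignet}, which is exactly what the paper does: its own ``proof'' of this proposition consists solely of citations to specific equations and theorems there. Parts (a)--(c) are handled correctly, and your overall strategy for (d) (test against $\Theta=\sum_j\mathrm{sign}(a_j)\mathcal{D}_G\Phi_m(h;\cdot,y_j)$ and use diagonal dominance at scale $m\sim A/\eta$) is the one used in \cite{eignet}.

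There is, however, one genuine slip in part (d): the claim $\|H^\flat\|_{C^S}\le cm^\beta$ for $H^\flat(t)=h(t)/b(mt)$ is false whenever $S>\beta$, which is exactly the regime you need ($S>q+1$, while the hypothesis is only $\beta>0$). Unlike $g$, the cutoff $h$ is \emph{not} supported away from $0$; on $(0,1/2)$ one has $h\equiv 1$, so $(H^\flat)^{(k)}(t)=m^k(1/b)^{(k)}(mt)$, and for $mt$ of order $1$ this is $\sim m^k$, not $m^\beta$. Thus the localization bound you stated at the outset, with the plain $C^S$ norm, does not yield $|\Phi_m(H^\flat;x,y)|\le cm^{q+\beta}(1+md(x,y))^{-S}$ directly. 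The fix is immediate and uses what you have already proved: set $Q=\sum_j\mathrm{sign}(a_j)\Phi_m(h;\cdot,y_j)\in\Pi_m$, so that $\Theta=\mathcal{D}_GQ$ and part (c) gives $\|\Theta\|_\infty\le cm^\beta\|Q\|_\infty$; then localization of $\Phi_m(h;\cdot,\cdot)$ (here $h$ is a fixed function, so $\|h\|_{C^S}\le c$) together with your $\eta$-packing bound gives $\|Q\|_\infty\le cm^q$. This yields $\|\Theta\|_\infty\le cm^{\beta+q}$ with a constant independent of $|\C|$, which is precisely the delicate point you correctly flag, and the rest of your argument goes through.
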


\begin{proof}\ 
The second inequality in \eref{btimesgnorm} is proved in \cite[Eqn.~(5.3)]{eignet}. The first inequality in \eref{btimesgnorm} follows easily from \cite[Eqn.~(5.11)]{eignet}.
Part (b) is proved in \cite[Proposition~5.2]{eignet}.
Part (c) is proved in \cite[Eqn.~(5.33)]{eignet}, used with $\gamma=0$.
Part (d) is proved in \cite[Theorem~3.4]{eignet}.
\end{proof}\\[1ex]

\noindent\textsc{Proof of Proposition~\ref{mumetricprop}.} \\

We note that Proposition~\ref{eignetprop} shows that $G$ is defined for \textbf{all} $x,y\in\XX$. Hence, \eref{glpuniform} shows that for any $\mu\in C_0^*$, $\int_\XX \|G(\circ,y)\|_pd|\mu|(y)$ is well defined, and hence, so is $\tn\mu\tn_{G;p}$.
%

It is clear that $\tn\circ\tn_{G;p}$ is a semi-norm. If $\mu\in C_0^*$ and $\tn\mu\tn_{G;p}=0$ then $b(\lambda_j)\hat{\mu}(j)=0$ for all $j$; i.e., $\hat{\mu}(j)=0$ for all $j$. Since the system $\{\phi_j\}_{j=0}^\infty$ is fundamental in $C_0$, this implies that $\mu=0$. The fact that $\tn\nu_n-\nu\tn_{G;p}\to 0$ implies that $\widehat{(\nu_n-\nu)}(k)\to 0$ for all $k$, which in turn implies that $\nu_n\stackrel{*}{\to}\nu$. Conversely, if $\nu_n\stackrel{*}{\to}\nu$ then for each $m\ge 0$,
$$
\lim_{n\to\infty}\int_\XX \Phi_{2^m}(hb_{2^m};x,y)d(\nu_n-\nu)(y)=0.
$$
The dominated convergence theorem now leads to the fact that $\tn\nu_n-\nu\tn_{G;p}\to 0$.  \qed\\

\noindent\textsc{Proof of Theorem~\ref{murecuptheo}.}\\

Using Fubini's theorem and then making a change of dummy variables, we see that for $x\in\XX$,
\bea\label{pf1eqn5}
\int_\XX G(x,y)d\nu_n(y)&=&\int_\XX G(x,y)\int_\XX\Phi_{2^n}(h;y,z)d\mu(z)d\mu^*(y)+\int_\XX G(x,y)\int_\XX\Phi_{2^n}(h;y,z)P_n(z)d\mu^*(z)d\mu^*(y) \nonumber\\
&=&
\int_\XX \Phi_{2^n}(hb_{2^n};x,y)d\mu(y)+\int_\XX \Phi_{2^n}(hb_{2^n};x,y)P_n(y)d\mu^*(y).
\eea
Hence, \eref{glpuniform} leads to
\bea\label{pf1eqn4}
\lefteqn{\left\| \int_\XX G(\circ,y)d\nu_n(y)-\int_\XX G(\circ,y)d\mu(y)\right\|_p}\nonumber\\
&\le& \int_\XX \|G(\circ,y)-\Phi_{2^n}(hb_{2^n};\circ,y)\|_pd|\mu|(y) + \int_\XX \|G(\circ,y)-\Phi_{2^n}(hb_{2^n};\circ,y)\|_p|P_n(y)|d\mu^*(y)\nonumber\\
&&\qquad\qquad+\int_\XX \|G(\circ,y)\|_p |P_n(y)|d\mu^*(y)\nonumber\\
& \le& c2^{-n(\beta-q/p')}\left\{|\mu|(\XX)+\|P_n\|_1\right\}+ c\|P_n\|_1.
\eea
This proves \eref{mudegapprox}. The proof of \eref{highpassapprox} is similar; the last term in the middle expression in \eref{pf1eqn4} does not appear in this case. \qed\\

It is convenient to organize some details of the proof of Theorem~\ref{convtheo} in the following lemma.

\begin{lemma}\label{convtheolemma}
Let $1\le p\le \infty$, $\mu\in C_0^*$, $\beta\in\RR$, and $b$ be a mask of type $\beta$. Then for any $r\in\RR$,
\be\label{betequiv}
\sup_{n\ge 1}2^{n(\beta+r)}\left\|\sigma_{2^n}(gb_{2^n};\mu)\right\|_p \sim \sup_{n\ge 1}2^{nr}\left\|\sigma_{2^n}(g;\mu)\right\|_p
\ee
\end{lemma}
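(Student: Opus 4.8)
\textbf{Proof proposal for Lemma~\ref{convtheolemma}.}

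The plan is to exploit the spectral relationship between the operators $\sigma_{2^n}(gb_{2^n};\cdot)$ and $\sigma_{2^n}(g;\cdot)$, which differ precisely by the multiplier sequence $\{b(\lambda_k)\}$ localized to the annulus where $g(\lambda_k/2^n)$ is supported, namely $\lambda_k \in [2^{n-1}, 2^n)$. On this annulus $b(\lambda_k) = (1+\lambda_k)^{-\beta}F_b(\log\lambda_k) \sim 2^{-n\beta}$ since $F_b$ is bounded above and below by positive constants. So morally $\sigma_{2^n}(gb_{2^n};\mu) \approx 2^{-n\beta}\sigma_{2^n}(g;\mu)$, and multiplying by $2^{n(\beta+r)}$ on the left should match multiplying by $2^{nr}$ on the right. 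The work is to turn this heuristic into two-sided norm bounds, which requires writing one operator as a bounded (uniformly in $n$) transformation applied to the other.

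The key steps I would carry out, in order. First, introduce auxiliary masks: let $\tilde b$ be a function that agrees with $t\mapsto t^{\beta}b(t)$ (suitably smoothed near $0$) so that $\tilde b$ is itself a mask of type $0$, and note $\sigma_{2^n}(g b_{2^n};\mu)(x) = \sum_k g(\lambda_k/2^n) b(\lambda_k)\hat\mu(k)\phi_k(x)$. Since $g$ is supported in $[1/2,1]$, on the relevant range $b(\lambda_k) = 2^{-n\beta}(\lambda_k/2^n)^{-\beta}\cdot 2^{n\beta}b(\lambda_k)\cdots$ — more cleanly, write $b(\lambda_k) = 2^{-n\beta} G_n(\lambda_k/2^n)$ where $G_n(u) = 2^{n\beta}b(2^n u)$, and observe that on $\supp g \subseteq [1/2,1]$ the functions $G_n$ and $1/G_n$ are, together with their derivatives up to order $S$, bounded uniformly in $n$ (this is exactly the type-$\beta$ mask structure: $G_n(u) = (2^{-n}+u)^{-\beta}(2^{-n})^{\beta}\cdot(1+2^n u)^{\beta} \cdots F_b(\log 2^n + \log u)$, and the $F_b$-derivative bounds are scale-invariant). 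Second, conclude that $g\cdot G_n$ and $g/G_n$ are masks (of type $0$) with constants uniform in $n$. Third, apply the uniform boundedness of the localized operators: from Proposition~\ref{eignetprop}(a), or more directly from the fact that for any mask $H$ of type $0$ one has $\sup_x\|\Phi_{2^n}(H;x,\circ)\|_1 \le c$ with $c$ depending only on the mask constants, deduce that $\|\sigma_{2^n}(gG_n \cdot (\text{something});\mu)\|_p$ is controlled. Concretely, since $g b_{2^n}$ and $g$ have the same spectral support, we may write $g(\lambda_k/2^n) b(\lambda_k) = 2^{-n\beta}(g G_n)(\lambda_k/2^n)\cdot \frac{g(\lambda_k/2^n)}{g(\lambda_k/2^n)}$ — better: pick a $C^\infty$ cutoff $g_1$ supported in $[1/4, 2]$ with $g_1 \equiv 1$ on $[1/2,1]$, so that $g = g\cdot g_1$ and $g_1 b_{2^n}$-type multipliers are still uniform masks; then
\be
\sigma_{2^n}(g;\mu) = \sum_k g(\lambda_k/2^n)\hat\mu(k)\phi_k = 2^{n\beta}\sum_k \frac{g(\lambda_k/2^n)}{G_n(\lambda_k/2^n)} \cdot b(\lambda_k) g_1(\lambda_k/2^n)\hat\mu(k)\phi_k,
\ee
which expresses $\sigma_{2^n}(g;\mu)$ as $2^{n\beta}$ times a uniformly bounded operator applied to $\sigma_{2^n}(g_1 b_{2^n};\mu)$; and symmetrically $\sigma_{2^n}(g_1 b_{2^n};\mu)$ in terms of $\sigma_{2^n}(g_1;\mu)$, with one more cutoff layer so that everything closes up. Tracking the $2^{n\beta}$ factors through both directions yields the two-sided equivalence \eref{betequiv}.

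The main obstacle I anticipate is the bookkeeping at the scale boundary: making precise that $G_n(u) = 2^{n\beta}b(2^n u)$ and its reciprocal have $S$ derivatives bounded uniformly in $n$ on the fixed compact set $\supp g$, starting from the definition $b(t) = (1+t)^{-\beta}F_b(\log t)$ for $t>0$. The point is that $\log(2^n u) = n\log 2 + \log u$ shifts the argument of $F_b$, but the derivative bounds $|F_b^{(k)}|\le c(b)$ are translation-invariant, and $(1+2^n u)^{-\beta}\cdot 2^{n\beta} = (2^{-n}+u)^{-\beta} \to u^{-\beta}$ smoothly and non-degenerately on $u\in[1/4,2]$ uniformly in $n\ge 1$. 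Once this uniform-mask claim is nailed down, the rest is an application of the boundedness of the $\Phi_{2^n}$-operators already available from \cite{eignet} (equivalently Proposition~\ref{eignetprop}(a) with $p=1$ giving $L^p\to L^p$ bounds by interpolation/duality, or directly the kernel $L^1$-norm bound), together with Young-type convolution estimates on $\XX$ in the sense of the admissible-system structure. I do not expect the composition/triangle-inequality steps to present difficulty beyond careful choice of the nested cutoffs $g_1, g_2$ ensuring each intermediate multiplier is again a mask of bounded type.
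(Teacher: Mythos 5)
Your proposal is correct and follows essentially the same route as the paper: the operator bound $\|\sigma_{2^n}(gb_{2^n};f)\|_p\le c2^{-n\beta}\|f\|_p$ (which the paper takes directly from the kernel estimate \eref{btimesgnorm} plus Young's inequality, and which you re-derive by rescaling $b$ to the uniformly controlled type-$0$ masks $G_n$), a wider reproducing cutoff equal to $1$ on $\supp g$ (the paper's $\tilde g(t)=h(t/2)-h(4t)$, your $g_1$), the adjacent-dyadic-scale decomposition $\tilde g(t)=g(t/2)+g(t)+g(2t)$, and the reverse inequality by the symmetric observation that $1/b$ (equivalently $1/G_n$) is a mask of type $-\beta$. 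The only correction needed is that $\supp g=[1/4,1]$ rather than $[1/2,1]$, so your cutoff $g_1$ must equal $1$ on $[1/4,1]$ and be supported in a slightly larger annulus such as $[1/8,2]$.
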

\begin{proof}\ 
In view of \eref{btimesgnorm} with $p=1$, we have for any real $\beta$ and mask $b$ of type $\beta$.
$$
\sup_{x\in\XX}\int_\XX |\Phi_{2^n}(gb_{2^n}; x, y)|d\mu^*(y) \le c 2^{-n\beta}.
$$
Consequently, Young inequality shows that for $1\le p\le \infty$ and $f\in L^p$,
\be\label{pf2eqn1}
\left\|\sigma_n(gb_{2^n};f)\right\|_p \le c2^{-n\beta}\|f\|_p.
\ee
In this proof, let $\tilde{g}(t)=h(t/2)-h(4t)$.
Then $\tilde{g}$ is supported on $[1/8,2]\cup [-2,-1/8]$.
Analogous to \eref{pf2eqn1}, we see that for $m\ge 1$, \eref{btimesgnorm} and Young's inequality lead to
\be\label{opnormbd}
\|\sigma_{2^m}(g;f)\|_p \le c\|f\|_p.
\ee
In view of the fact that $g(t)\tilde{g}(t)=1$ for all $t$ in the support of $g$, we have
$$
\sigma_{2^n}(gb_{2^n};\mu)=\sum_{k=0}^\infty g\left(\frac{\lambda_k}{2^n}\right)b(\lambda_k)\hat{\mu}(k)\phi_k =
\sum_{k=0}^\infty g\left(\frac{\lambda_k}{2^n}\right)b(\lambda_k)\tilde{g}\left(\frac{\lambda_k}{2^n}\right)\hat{\mu}(k)\phi_k=\sigma_{2^n}(gb_{2^n};\sigma_{2^n}(\tilde{g};\mu)).
$$
Therefore, using \eref{pf2eqn1} with $\sigma_{2^n}(\tilde{g};\mu)$ in place of $f$, the fact that $\tilde{g}(t)=g(t/2)+g(t)+g(2t)$ for all $t\in\RR$, and \eref{opnormbd}, we conclude that
$$
\left\|\sigma_{2^n}(gb_{2^n};\mu)\right\|_p \le c2^{-n\beta}\left\|\sigma_{2^n}(\tilde{g};\mu)\right\|_p\le c2^{-n\beta}\left\{\left\|\sigma_{2^n}(g;\mu)\right\|_p+\left\|\sigma_{2^{n+1}}(g;\mu)\right\|_p+\left\|\sigma_{2^{n-1}}(g;\mu)\right\|_p\right\}.
$$
Hence,
$$
\sup_{n\ge 1}2^{n(\beta+r)}\left\|\sigma_{2^n}(gb_{2^n};\mu)\right\|_p \le c \sup_{n\ge 1}2^{nr}\left\|\sigma_{2^n}(g;\mu)\right\|_p.
$$
Since $1/b$ is a mask of type $-\beta$, this leads to \eref{betequiv}.
\end{proof}

\vskip 6pt
\noindent\textsc{Proof of Theorem~\ref{convtheo}.}\\
Let $f\in W_{r,p}$.
Since $g$ is supported on $[1/4,1]$, $\sigma_{2^m}(g;P)=0$ for all $P\in \Pi_{2^{m-2}}$. Therefore, using \eref{opnormbd}, we obtain for any $P\in \Pi_{2^{m-2}}$ that
$$
\|\sigma_{2^m}(g;f)\|_p=\|\sigma_{2^m}(g;f-P)\|_p \le c\|f-P\|_p;
$$
i.e.,
$$
\|\sigma_{2^m}(g;f)\|_p\le cE_{2^{m-2},p}(f).
$$
Since $f\in W_{r,p}$, this yields, together with \eref{betequiv} that
\be\label{pf2eqn3}
\left\|\sigma_{2^m}(gb_{2^m};f)\right\|_p \le c(f)2^{-m(\beta+r)}.
\ee
Therefore,
$$
\left\|\int_\XX G(\circ,y)f(y)d\mu^*(y)-\int_\XX G(\circ,y)\sigma_{2^n}(h;f)(y)d\mu^*(y)\right\|_p \le \sum_{m=n+1}^\infty \left\|\sigma_{2^m}(gb_{2^m};f)\right\|_p\le c(f)2^{-n(\beta+r)}.
$$
Thus, part (a) implies part (b).

Conversely, let part (b) hold. Then
\be\label{pf2eqn4}
\left\|\sigma_{2^m}(gb_{2^m};\mu)\right\|_p \le c(\mu)2^{-m(\beta+r)}, \qquad m\ge 1.
\ee
In view of \eref{betequiv}  this leads to
\be\label{pf2eqn5}
\|\sigma_{2^m}(g;\mu)\|_p \le c(\mu)2^{-mr}.
\ee
This implies that the sequence
$$
\sigma_{2^n}(h;\mu)=\sigma_1(h;\mu)+\sum_{m=0}^n \sigma_{2^m}(g;\mu)
$$
converges in $L^p$  to some $f\in L^p$. Moreover, $\hat{\mu}(k)=\hat{f}(k)$ for all $k=0,1,\cdots$. Therefore, $d\mu=fd\mu^*$.
Further, \eref{pf2eqn5} shows that
$$
E_{2^n,p}(f) \le \|f-\sigma_{2^n}(h;f)\|_p \le \sum_{m=n+1}^\infty \left\|\sigma_{2^m}(g;\mu)\right\|_p \le c(f)2^{-nr}.
$$
Thus, $f\in W_{r,p}$. \qed\\

Our proof of Theorems~\ref{widththeo} and \ref{discmasstheo} depends upon another notion of widths, the
so-called \emph{Bernstein width}. This is defined by for a weak-star compact subset $\mathcal{K}\subset C_0^*$ and integer $M\ge 1$ by
\be\label{bernwidth}
b_M(G,p;\mathcal{K})= \sup_{Y_{M+1}}\{\rho: \mu\in Y_{M+1}, \tn\mu\tn_{G;p}\le \rho \mbox{ implies } \mu\in \mathcal{K}\},
\ee
where the supremum is over all subspaces $Y_{M+1}$ of $C_0^*$ with dimension $M+1$.
It is proved in \cite[Theorem~3.1]{devore1989optimal} that for any integer $M\ge 1$,
\be\label{howardmicchtheo}
d_M(G,p;\mathcal{K})\ge b_M(G,p;\mathcal{K}).
\ee

 \noindent\textsc{Proof of Theorem~\ref{widththeo}.} \\

Let $\C=\{y_1,\cdots,y_n\}$ be a maximal $(\kappa_1(M+1))^{-1/q}$ separated subset of $\XX$, where $\kappa_1$ is the constant appearing in the upper bound \eref{ballmeasurecond} on the $\mu^*$-measure of balls. Then  $\disp\XX=\bigcup_{j=1}^n \mathbb{B}(y_j,(\kappa_1(M+1))^{-1/q})$. In view of  \eref{ballmeasurecond},
$$
1=\mu^*(\XX)\le \sum_{j=1}^n \mu^*\left(\mathbb{B}(y_j,(\kappa_1(M+1))^{-1/q})\right)\le n/(M+1).
$$
Therefore,  $\C_1=\{y_1,\cdots,y_{M+1}\}$ satisfies $\eta(\C_1)\ge  \eta(\C)\ge(\kappa_1(M+1))^{-1/q}$. We consider the $M+1$ dimensional  space
$Y=\mathsf{span}\{\delta_{y_k} : k=1,\cdots, M+1\}\subset C_0^*$, where $\delta_y$ denotes the Dirac delta at $y$. For any $\mu=\sum_{k=1}^{M+1}a_k\delta_{y_k}$,  Proposition~\ref{eignetprop}(d) shows that
$$
|\mu|(\XX)\le c\eta(\C_1)^{-\beta}\left\|\sum_{k=1}^{M+1} a_kG(\circ,y_k)\right\|_1\le cM^{\beta/q}\tn \mu\tn_{G;1}.
$$
In view of \eref{howardmicchtheo}, this leads to
\eref{widthest}. \qed\\

\noindent\textsc{Proof of Theorem~\ref{discmasstheo}.}\\

Choosing $M$ so that $(\kappa_1(M+1))^{-1/q}\ge\eta$, the elements of the space $Y$ constructed in the proof of Theorem~\ref{widththeo}
serves also for this theorem in order to apply \eref{howardmicchtheo}. \qed

\end{document}